\newcommand{\N}{\mathbb{N}}
\newcommand{\Out}[0]{\mathrm{Out}}
\renewcommand{\P}{\mathbb{P}}
\newtheorem{theorem}{Theorem}[section]
\newtheorem{lemma}[theorem]{Lemma}
\newtheorem{corollary}[theorem]{Corollary}
\newtheorem{question}[theorem]{Question}
\theoremstyle{definition}
\newtheorem{remark}[theorem]{Remark}
\newcommand{\I}{\mathcal{I}}
\newcommand{\T}{\mathcal{T}}
\newcommand{\C}{\mathcal{C}}
\newcommand{\Mod}{\mathrm{Mod}}
\newcommand{\Aut}{\mathrm{Aut}}
\begin{document}

\title{Random extensions of free groups and surface groups are hyperbolic}

\author[S.J. Taylor]{Samuel J. Taylor}
\address{Department of Mathematics, 
Yale University, 
20 Hillhouse Ave, 
New Haven, CT 06520, USA}
\email{\href{mailto:s.taylor@yale.edu}{s.taylor@yale.edu}}

\author[G. Tiozzo]{Giulio Tiozzo}
\address{Department of Mathematics, 
Yale University, 
20 Hillhouse Ave, 
New Haven, CT 06520, USA}
\email{\href{mailto:giulio.tiozzo@yale.edu}{giulio.tiozzo@yale.edu}}
\date{\today}
%\thanks{}

\begin{abstract}
In this note, we prove that a random extension of either the free group $F_N$ of rank $N\ge3$ or of the fundamental group of a closed, orientable surface $S_g$ of genus $g\ge2$ is a hyperbolic group. Here, a random extension is one corresponding to a subgroup of either $\Out(F_N)$ or $\Mod(S_g)$ generated by $k$ independent random walks. Our main theorem has several applications, including that a random subgroup of a weakly hyperbolic group is free and undistorted.
\end{abstract}

\maketitle

\section{Introduction}
Let $F=F_N$ denote the free group of rank $N\ge 3$ and $S=S_g$ the closed, orientable surface of genus $g\ge2$. Group extensions of both $F$ and $\pi_1(S)$ can be understood by investigating subgroups of their respective outer automorphism groups. For this, denote the outer automorphism group of $F$ by $\Out(F)$ and the mapping class group of $S$ by $\Mod(S)$. 

For the surface $S$, there is the well-known Birman exact sequence \cite{birman1969mapping}
$$1 \to \pi_1(S) \to \Mod(S;p) \overset{f}{\to} \Mod(S) \to 1, $$
where $\Mod(S;p)$ denotes the group of mapping classes that fix the marked point $p \in S$ and $f: \Mod(S;p) \to \Mod(S)$ is the surjective homomorphism that forgets this requirement. Given any finitely generated subgroup $\Gamma \le \Mod(S)$, its preimage $E_\Gamma = f^{-1}(\Gamma)$ in $\Mod(S;p)$ is a finitely generated group fitting into the sequence
$$1 \to \pi_1(S) \to E_\Gamma \to \Gamma \to 1. $$
We say that $E_\Gamma$ is the \emph{surface group extension} corresponding to $\Gamma \le \Mod(S)$. Much work has gone into understanding what conditions on $\Gamma \le \Mod(S)$ imply that the corresponding extension $E_\Gamma$ is hyperbolic. Such subgroups were introduced by Farb and Mosher as \emph{convex cocompact} subgroups of the mapping class group \cite{FarbMosher} and have since become an active area of study. See for example \cite{KentLein,KLSurvey,H, DKL1, MT1} and Section \ref{sub:hyp_surface} for details.

The situation for extensions of the free group $F$ is similar; by definition there is the short exact sequence
$$1 \to F \to \Aut(F) \overset{f}{\to} \Out(F) \to1, $$
where $f: \Aut(F) \to \Out(F)$ is now the induced quotient homomorphism. As before, a finitely generate subgroup $\Gamma \le \Out(F)$ pulls back via $f$ to the corresponding \emph{free group extension} $E_\Gamma = f^{-1}(\Gamma)$. Conditions on $\Gamma \le \Out(F)$ which imply that the extension group $E_\Gamma$ is hyperbolic were recently given by Dowdall and the first author in \cite{DT1}. See Section \ref{sub:hyp_free} for details.

In this note, we consider the following question: 
\begin{question}\label{q}
Given a  \emph{random} subgroup $\Gamma$ of either $\Mod(S)$ or $\Out(F)$, how likely is it that the corresponding extension group $E_\Gamma$ is hyperbolic?
\end{question}

By employing techniques developed by Maher and the second author in \cite{MaherTiozzo}, we answer Question \ref{q} by considering subgroups generated by random walks on either $\Mod(S)$ or $\Out(F)$. The point is that the references above characterize hyperbolicity of the extension group $E_\Gamma$ (for both $F$ and $\pi_1(S)$) solely in terms of the action of $\Gamma$ on a certain hyperbolic graph. This is precisely the situation considered in 
\cite{MaherTiozzo}. For this reason, we can treat both the cases of free group extensions and surface group extensions at once. 

\subsection{Results}
To state our main theorem, we briefly introduce our model of random subgroups.  
Additional background on random walks is given in Section \ref{sub:random}. 

First, let $X$ be a separable hyperbolic metric space and $G$ a countable group acting on $X$ by isometries. The action $G \curvearrowright X$ is said to be \emph{nonelementary} if there are $g,h \in G$ which are loxodromic for the action and whose quasiaxis determine $4$ distinct endpoints on $\partial X$. A probability measure $\mu$ on $G$ is \emph{nonelementary} if the semigroup generated by its support is a subgroup of $G$ whose action on $X$ is nonelementary.

Now let $\mu$ be a nonelementary probability measure on $G$ with respect to the action $G\curvearrowright X$ and consider $k$ independent random walks $(w^1_n)_{n \in \N}, \ldots, (w^k_n)_{n \in \N}$ whose increments are distributed according to $\mu$. For each $n\in \N$, we can consider the subgroup generated by the $n^{th}$ steps of our random walks,
$$\Gamma(n) = \langle w^1_n, \ldots, w^k_n \rangle \le G, $$
which we endow with the word metric coming from a given generating set. 

\begin{theorem}\label{th:random_qi}
Let $G$ be a countable group with a nonelementary action by isometries on a separable hyperbolic space $X$. Let $\mu$ be a nonelementary probability measure on $G$ and fix $x_0 \in X$. Then, the probability that the orbit map 
\begin{align*}
\Gamma(n) = \langle w^1_n, \ldots, w^k_n \rangle &\to X \\
g &\mapsto g\cdot x_0 
\end{align*}
is a quasi--isometric embedding goes to $1$ as $n \to \infty$. 
\end{theorem}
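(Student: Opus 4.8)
The plan is to realize $w^1_n,\dots,w^k_n$ as a \emph{Schottky system} acting on $X$ with high probability, and then to conclude by a quantitative ping--pong argument. Concretely, I would show that on an event of probability tending to $1$ one has: (i) each $w^i_n$ is loxodromic for the action on $X$ with stable translation length $\tau(w^i_n)\ge Ln$ for a fixed constant $L>0$; and (ii) the $2k$ points $\{a_i^{+},a_i^{-}\}$ of $\partial X$, where $a_i^{\pm}$ are the attracting and repelling fixed points of $w^i_n$, have all pairwise Gromov products (measured at $x_0$) at most $\tfrac{L}{100}\,n$, so that distinct quasi--axes coarsely overlap on a segment of length at most $\tfrac{L}{100}\,n$. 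Granting (i) and (ii), a standard local--to--global argument for quasi--geodesics shows that $\Gamma(n)$ is free of rank $k$ with free basis $\{w^i_n\}$ and that the orbit map $g\mapsto g\cdot x_0$ is a quasi--isometric embedding (with constants depending on $n$, which is permitted). All of the random--walk inputs below are available in exactly the generality of the theorem (countable group, separable hyperbolic space), by the work recalled in Section~\ref{sub:random}.

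Step 1 (individual elements). By the results on random walks on weakly hyperbolic groups, a nonelementary random walk makes positive linear progress in $X$ with exponentially small probability of sublinear progress: there is $L>0$ with $\P\big(d(x_0,w_n x_0)\le 2Ln\big)\to 0$ and $\P\big(\tau(w_n)\le Ln\big)\to 0$. Moreover, with probability $\to 1$ a geodesic $[x_0,w_n x_0]$ fellow--travels a quasi--axis of $w_n$ outside bounded neighbourhoods of its two endpoints. Applying this to each of the $k$ independent walks and to their reflections, and taking a union bound, yields (i).

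Step 2 (separation of axes). This is the heart of the argument. The $k$ walks are mutually independent, each converges almost surely to a boundary point whose law (the harmonic measure) is non--atomic by nonelementarity, and conditioning on $w^1_n,\dots,w^{i-1}_n$ leaves the $i$--th walk nonelementary. Hence for $i\ne j$ the Gromov products $(w^i_n x_0\mid w^j_n x_0)_{x_0}$ are $o(n)$ — indeed bounded in probability — as $n\to\infty$; the products involving the backward elements $(w^i_n)^{-1}$ are controlled in the same way using the reflected measure $\check\mu$, which is again nonelementary. Together with the fellow--travelling in Step~1, this gives (ii) on an event of probability $\to 1$.

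Step 3 (ping--pong). With $\min_i\tau(w^i_n)\ge Ln$ and pairwise coarse axis--overlaps at most $\tfrac{L}{100}n$, for any reduced word $g=w^{i_1,\varepsilon_1}_n\cdots w^{i_m,\varepsilon_m}_n$ the concatenation of the corresponding translated axis--segments is a path built from long legs (length $\asymp Ln$) with uniformly bounded backtracking at the junctions, hence — once $n$ is large relative to the hyperbolicity constant — a quasi--geodesic; consequently $d_X(x_0,g\cdot x_0)\ge c\,nm - C$ for constants $c>0$, $C$ depending only on $L$ and $\delta$. In particular no nontrivial reduced word represents the identity, so $\Gamma(n)$ is free of rank $k$ on the basis $\{w^i_n\}$, and since trivially $d_X(x_0,g\cdot x_0)\le \big(\max_i d(x_0,w^i_n x_0)\big)\,m$, the orbit map is a quasi--isometric embedding for any word metric on $\Gamma(n)$; this holds on an event of probability $\to 1$, proving the theorem.

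The main obstacle is Step~2: upgrading the heuristic that ``independent random walks behave generically'' into quantitative transversality of all $2k$ quasi--axes, with the subtlety that one must simultaneously control the Gromov products involving the inverse (backward) elements and verify that conditioning on the earlier walks preserves nonelementarity. Step~1 is a quotation from \cite{MaherTiozzo}, and Step~3 is a routine, if somewhat technical, computation in hyperbolic geometry.
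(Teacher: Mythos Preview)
Your outline is on the right track and close in spirit to the paper's argument, but you take a detour through quasi--axes and boundary fixed points that the paper avoids entirely, and in doing so you gloss over the one estimate that actually requires work.

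The paper never invokes loxodromicity, translation length, or boundary points. It shows directly (Lemma~\ref{lem:qi_conditions}) that the orbit map is a quasi--isometric embedding as soon as
\[
d(x_0, w^i_n x_0)\;\ge\;2\,\bigl((w^j_n)^{\pm1}x_0\cdot(w^l_n)^{\pm1}x_0\bigr)_{x_0}+18\delta+1
\]
for all relevant indices and signs. The left side is handled by positive drift (Theorem~\ref{thm:drift}). On the right side, the Gromov products with $j\ne l$ are bounded exactly as you say, via independence and shadow decay (Lemma~\ref{lem:ind}); your ``reflected measure'' remark is correct there. The remaining case is
\[
\bigl(w^i_n x_0\cdot (w^i_n)^{-1}x_0\bigr)_{x_0},
\]
and this is the estimate your Step~2 does not justify. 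You say these products are ``controlled in the same way using the reflected measure $\check\mu$,'' but that argument fails here: $w^i_n$ and $(w^i_n)^{-1}$ are deterministic functions of one another, not independent, so no conditioning or reflecting recovers the shadow--decay computation. Your comment about ``conditioning on the earlier walks preserving nonelementarity'' is likewise beside the point---the difficulty is within a single walk, not across walks.

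The paper handles this case (Lemma~\ref{lem:inverse}) by a midpoint decomposition: write $w_n=w_m u_m$ with $m=\lfloor n/2\rfloor$, so that $w_m$ and $u_m$ \emph{are} independent, and combine shadow decay with positive drift to get $(w_n x_0\cdot w_n^{-1}x_0)_{x_0}\le l(n)$ with probability tending to $1$ for any $l(n)$ with $\limsup l(n)/n<L/2$. This is the genuine content of the proof. You may have intended it to be subsumed in your Step~1 citation of the translation--length and fellow--travelling results from \cite{MaherTiozzo}, but note that knowing $\tau(w_n)\ge Ln$ alone does not bound $(a_i^{+}\cdot a_i^{-})_{x_0}$ from above without an upper bound on $d(x_0,w_nx_0)$, and no such upper bound is available absent moment hypotheses on $\mu$; in fact, the translation--length theorem in \cite{MaherTiozzo} is itself proved via essentially the same midpoint trick.

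Once that estimate is in hand, your Step~3 and the paper's Lemma~\ref{lem:qi_conditions} are interchangeable. The paper's route is simply more economical: it compares $d(x_0,w^i_nx_0)$ to Gromov products of orbit points directly, so the Schottky/ping--pong apparatus with axes and boundary points is never needed.
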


Combining Theorem \ref{th:random_qi} with work of Farb--Mosher \cite{FarbMosher}, Kent--Leininger \cite{KentLein}, and Hamenst\"adt \cite{H} (see Section \ref{sub:hyp_surface}) answers Question \ref{q} for surface group extensions:

\begin{theorem}[Random surface group extensions] \label{th:surface_extensions}
Let $\mu$ be a nonelementary probability measure on $\Mod(S)$ and let $\Gamma(n) = \langle w^1_n, \ldots, w^k_n \rangle$ denote the subgroup generated by the $n^{th}$ steps of $k$ independent random walks. Then the probability that the surface group extension $E_{\Gamma(n)}$ is hyperbolic goes to $1$ as $n \to \infty$.
\end{theorem}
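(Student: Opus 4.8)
The plan is to reduce hyperbolicity of $E_{\Gamma(n)}$ to a statement about the action of $\Gamma(n)$ on a hyperbolic space, and then apply Theorem \ref{th:random_qi}. The key input from Farb–Mosher, Kent–Leininger, and Hamenstädt is that a finitely generated subgroup $\Gamma \le \Mod(S)$ is convex cocompact — equivalently, the extension $E_\Gamma$ is hyperbolic — precisely when $\Gamma$ is purely pseudo-Anosov and the orbit map $\Gamma \to \mathcal{C}(S)$ into the curve complex of $S$ is a quasi-isometric embedding (for some, hence any, choice of basepoint $x_0 \in \mathcal{C}(S)$). So it suffices to show that with probability tending to $1$ as $n \to \infty$, the random subgroup $\Gamma(n) = \langle w^1_n, \ldots, w^k_n\rangle$ has this property.

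First I would recall that $\Mod(S)$ acts on the curve complex $\mathcal{C}(S)$, which is a separable hyperbolic space (Masur–Minsky), and that this action is nonelementary: the loxodromic elements are exactly the pseudo-Anosov mapping classes, and there exist pseudo-Anosovs with four distinct endpoints on $\partial\mathcal{C}(S)$. Since $\mu$ is nonelementary as a measure on $\Mod(S)$ — by hypothesis the group generated by its support is a nonelementary subgroup, and any nonelementary subgroup of $\Mod(S)$ contains independent pseudo-Anosovs — the measure $\mu$ is nonelementary with respect to the action $\Mod(S) \curvearrowright \mathcal{C}(S)$ in the sense required by Theorem \ref{th:random_qi}. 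Applying that theorem with $X = \mathcal{C}(S)$, $G = \Mod(S)$, and basepoint $x_0 \in \mathcal{C}(S)$, we conclude that the probability that the orbit map $\Gamma(n) \to \mathcal{C}(S)$ is a quasi-isometric embedding tends to $1$ as $n \to \infty$.

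It remains to handle the "purely pseudo-Anosov" condition. I expect this to be the only real subtlety: a quasi-isometrically embedded subgroup of $\mathcal{C}(S)$ need not a priori have every nontrivial element pseudo-Anosov, though in fact a finitely generated subgroup whose orbit map to $\mathcal{C}(S)$ is a quasi-isometric embedding and which is undistorted is automatically convex cocompact by the Kent–Leininger/Hamenstädt characterization, and convex cocompact subgroups are purely pseudo-Anosov. More directly, one can argue that on the event that the orbit map is a quasi-isometric embedding, $\Gamma(n)$ is free of rank $k$ (a ping-pong argument on $\partial\mathcal{C}(S)$, which is also how Theorem \ref{th:random_qi} is typically proven), and every nontrivial element, being an infinite-order element acting loxodromically on $\mathcal{C}(S)$, is pseudo-Anosov. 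Hence on this event $E_{\Gamma(n)}$ is hyperbolic, and since the event has probability tending to $1$, the theorem follows.

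The main obstacle is simply making sure the characterization of hyperbolicity of $E_\Gamma$ is correctly packaged: one must cite the precise statement that combines Farb–Mosher (convex cocompact $\Rightarrow$ $E_\Gamma$ hyperbolic), Kent–Leininger and Hamenstädt (quasi-isometric orbit embedding into $\mathcal{C}(S)$ $\Leftrightarrow$ convex cocompact), so that the purely-geometric conclusion of Theorem \ref{th:random_qi} genuinely implies hyperbolicity of the extension. Once that dictionary is in place, the proof is a direct application of Theorem \ref{th:random_qi} together with the observation that loxodromic elements of $\mathcal{C}(S)$ are pseudo-Anosov.
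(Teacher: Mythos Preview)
Your proposal is correct and follows essentially the same route as the paper: apply Theorem \ref{th:random_qi} to the action $\Mod(S) \curvearrowright \mathcal{C}(S)$ and then invoke the Farb--Mosher/Kent--Leininger/Hamenst\"adt characterization (Theorem \ref{th:cc}) to conclude hyperbolicity of $E_{\Gamma(n)}$. Your digression about the ``purely pseudo-Anosov'' condition is unnecessary, since Theorem \ref{th:cc} already packages the implication $(2)\Rightarrow(3)$ directly---a quasi-isometric orbit embedding into $\mathcal{C}(S)$ alone suffices for hyperbolicity of the extension, with no separate verification required.
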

\begin{proof}
As discussed in Section \ref{sub:hyp_surface}, the mapping class group has a nonelementary action by isometries on the curve graph $\C(S)$. Theorem \ref{th:random_qi} then implies that the probability that the orbit map  $\Gamma(n) \to \C(S)$ is a quasi--isometric embedding goes to $1$ as $n \to \infty$. Since the extension $E_{\Gamma(n)}$ is hyperbolic whenever $\Gamma(n) \to \C(S)$ is a quasi--isometric embedding (Theorem \ref{th:cc}), the theorem follows.
\end{proof}

For extensions of free groups, we answer Question \ref{q} by combining Theorem \ref{th:random_qi} with work of Dowdall and the first author \cite{DT1} (see Section \ref{sub:hyp_free}):

\begin{theorem}[Random free group extensions] \label{th:free_extensions}
Let $\mu$ be a nonelementary probability measure on $\Out(F)$ and let $\Gamma(n) = \langle w^1_n, \ldots, w^k_n \rangle$. Then the probability that the free group extension $E_{\Gamma(n)}$ is hyperbolic goes to $1$ as $n \to \infty$.
\end{theorem}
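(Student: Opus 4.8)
The plan is to follow the exact same strategy that proves Theorem \ref{th:surface_extensions}, replacing the curve graph $\C(S)$ with the appropriate $\Out(F)$-graph supplied by \cite{DT1}. First I would recall the relevant result of Dowdall and the first author: there is a hyperbolic graph on which $\Out(F)$ acts by isometries — the \emph{co-surface graph} $\CC S$ (or, depending on the formulation one prefers, the relevant factor/free splitting complex) — together with a theorem asserting that if a finitely generated $\Gamma \le \Out(F)$ embeds quasi-isometrically via the orbit map into this graph, then the free group extension $E_\Gamma$ is hyperbolic. (This should be stated as a numbered theorem, say Theorem \ref{th:cc-free}, in Section \ref{sub:hyp_free}, parallel to Theorem \ref{th:cc}.) These are precisely the subgroups of $\Out(F)$ that \cite{DT1} identifies, and the hyperbolicity conclusion is exactly the one we want.

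Next I would check that the action of $\Out(F)$ on this graph is nonelementary and that the graph is separable, so that the hypotheses of Theorem \ref{th:random_qi} are met. Nonelementarity follows because $\Out(F)$ contains two independent fully irreducible automorphisms (e.g. obtained from a free basis by standard ping-pong or by Handel–Mosher), and fully irreducibles act loxodromically on this graph with distinct attracting/repelling laminations, hence determine four distinct boundary points; separability of the graph is immediate since it is a countable (indeed locally infinite but countable) simplicial complex. Given a nonelementary measure $\mu$ on $\Out(F)$, the semigroup generated by its support is by definition a subgroup whose action on $X = \CC S$ is nonelementary, so $\mu$ is a nonelementary measure in the sense required by Theorem \ref{th:random_qi}.

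With these observations in place the proof is a one-line deduction exactly as in Theorem \ref{th:surface_extensions}: fix a basepoint $x_0 \in \CC S$; Theorem \ref{th:random_qi} gives that $\mathbb{P}\big(\Gamma(n) \to \CC S \text{ is a quasi-isometric embedding}\big) \to 1$ as $n \to \infty$; and on this event $E_{\Gamma(n)}$ is hyperbolic by the cited theorem of \cite{DT1}. Therefore $\mathbb{P}(E_{\Gamma(n)} \text{ is hyperbolic}) \to 1$.

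The only real content beyond invoking Theorem \ref{th:random_qi} is matching conventions with \cite{DT1}: one must make sure the graph on which the qi-embedding criterion of \cite{DT1} is phrased is the same one (up to quasi-isometry) whose hyperbolicity and nonelementarity we are asserting, and that ``finitely generated $\Gamma$'' in their hypothesis is automatic here since $\Gamma(n)$ is generated by $k$ elements. I expect the main (minor) obstacle to be bookkeeping: the criterion in \cite{DT1} is stated for purely atoroidal, fully irreducible-rich subgroups via a qi-embedding into a particular complex, and one should confirm that a qi-embedded orbit map into that complex already forces the subgroup into the class covered by their theorem — this is exactly analogous to the surface case, where a qi-embedded orbit in $\C(S)$ automatically yields a convex cocompact subgroup, and it should follow from the same kind of argument (loxodromic orbit maps detect the relevant dynamical hypotheses).
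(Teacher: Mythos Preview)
Your proposal is correct and takes essentially the same approach as the paper: apply Theorem \ref{th:random_qi} to the nonelementary action of $\Out(F)$ on a hyperbolic graph and then invoke the Dowdall--Taylor criterion (Theorem \ref{th:free_cc}) exactly as Theorem \ref{th:cc} is invoked in the surface case. The specific graph the paper uses is the intersection graph $\I$ of Kapovich--Lustig (not the co-surface graph), and since Theorem \ref{th:free_cc} takes the qi-embedding of the orbit map as its \emph{only} hypothesis, your final-paragraph worry about verifying extra dynamical conditions is unnecessary.
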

\begin{proof}
As in Section \ref{sub:hyp_free}, $\Out(F)$ has a nonelementary action by isometries on the hyperbolic graph $\I$. The remainder of the proof follows exactly as in Theorem \ref{th:surface_extensions} after replacing $\C(S)$ with $\I$ and using Theorem \ref{th:free_cc} in place of Theorem \ref{th:cc}.
\end{proof}

\begin{remark}
When $k=1$, Theorem \ref{th:surface_extensions} is equivalent to the statement that the probability that $w_n$ is pseudo-Anosov goes to $1$ as $n \to \infty$. Versions of this result were proven by Rivin in \cite{rivin2008walks} and Maher in \cite{Maher}. Likewise, Theorem \ref{th:free_extensions} was also previously known in the special case where $k=1$, though by different methods. Indeed, Ilya Kapovich and Igor Rivin showed that for a random walk $(w_n)$ on $\Out(F)$ (with additional restrictions on the measure $\mu$), the probability that $w_n$ is atoroidal and fully irreducible goes to 1 as $n \to \infty$ \cite{rivin2010zariski}. (See Section \ref{sub:hyp_free} for definitions.)
\end{remark}

\subsection{Further applications}Thanks to the generality of Theorem \ref{th:random_qi}, which is made possible by the general framework of \cite{MaherTiozzo}, 
we can provide several other applications of interest. Following \cite{MaherTiozzo}, we say that $G$ is \emph{weakly hyperbolic} if $G$ admits a nonelementary action on a separable hyperbolic space $X$. 
We say that a \emph{random subgroup of $G$ has property $P$} if $$\mathbb{P}[\Gamma(n)\text{ has }P] \to 1 $$
as $n \to \infty$. 
For any nonelementary measure $\mu$, the proof of Theorem \ref{th:random_qi} additionally 
yields the following corollary.

\begin{corollary}\label{c:random_free}
A random subgroup of a weakly hyperbolic group is free and undistorted.
\end{corollary}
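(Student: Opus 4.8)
The plan is to extract both conclusions from the \emph{proof} of Theorem~\ref{th:random_qi} rather than from its statement. Following \cite{MaherTiozzo}, that proof shows that with probability tending to $1$ the tuple $(w^1_n,\dots,w^k_n)$ forms an independent (``Schottky'') set for the action $G\curvearrowright X$: each $w^i_n$ is loxodromic with translation length growing linearly in $n$, and there are pairwise disjoint ``shadow'' regions $A_1,B_1,\dots,A_k,B_k$ in (a neighborhood of) $X\cup\partial X$ around the quasi-axis endpoints of the $w^i_n$ on which the $w^i_n$ and their inverses play ping-pong. This ping-pong configuration is exactly the mechanism used to produce the quasi-isometric embedding of Theorem~\ref{th:random_qi}, and the observation is that it yields freeness for free: by the ping-pong lemma, every nontrivial reduced word $g$ in $w^1_n,\dots,w^k_n$ and their inverses is nontrivial in $G$ — indeed $d_X(x_0,g\cdot x_0)$ grows linearly with $d_{\Gamma(n)}(1,g)$ — so $w^1_n,\dots,w^k_n$ freely generate a free subgroup of rank $k$, and hence $\Gamma(n)$ is free.

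For undistortion I would run the standard metric comparison. Equip $G$ with the word metric $d_G$ of a finite generating set $S$ (the situation in all our applications) and set $L=\max_{s\in S}d_X(x_0,s\cdot x_0)$; since $G$ acts by isometries, the orbit map $(G,d_G)\to X$ is $L$-Lipschitz, i.e.\ $d_X(x_0,g\cdot x_0)\le L\,d_G(1,g)$. By Theorem~\ref{th:random_qi} there are constants $\lambda\ge 1$, $\lambda'\ge 0$ such that, with probability tending to $1$, $d_{\Gamma(n)}(1,g)\le \lambda\,d_X(x_0,g\cdot x_0)+\lambda'$ for all $g\in\Gamma(n)$. Composing gives $d_{\Gamma(n)}(1,g)\le \lambda L\,d_G(1,g)+\lambda'$, while the reverse bound $d_G(1,g)\le(\max_i|w^i_n|_S)\,d_{\Gamma(n)}(1,g)$ is immediate from the definition of the word metrics. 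Thus the inclusion $\Gamma(n)\hookrightarrow G$ is a quasi-isometric embedding, i.e.\ $\Gamma(n)$ is undistorted, with probability tending to $1$. Since both freeness and undistortion follow from the single ``Schottky'' event, $\mathbb{P}[\Gamma(n)\text{ is free and undistorted}]\to 1$ as $n\to\infty$.

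I expect no genuine obstacle here: the only substantive input is the Schottky/ping-pong description of $(w^1_n,\dots,w^k_n)$, which is already contained in the proof of Theorem~\ref{th:random_qi} via the machinery of \cite{MaherTiozzo}. The single point requiring care is to phrase the statement imported from that proof so that it simultaneously delivers the no-cancellation needed for freeness and the linear lower bound on $d_X(x_0,g\cdot x_0)$ along reduced words needed for the quasi-isometric embedding; but both are consequences of the same ping-pong estimate, so this is automatic once the Schottky conclusion is stated in that form.
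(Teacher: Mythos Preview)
Your proposal is correct and follows exactly the route the paper indicates: the paper gives no separate proof of this corollary, merely stating that it is ``additionally yield[ed]'' by the proof of Theorem~\ref{th:random_qi}, which is precisely your plan.

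One small point of alignment worth flagging: the paper's proof of Theorem~\ref{th:random_qi} does not literally set up disjoint shadow regions and invoke classical ping-pong as you describe. Instead it verifies the Gromov-product inequality~\eqref{ineq:assumption} and appeals to Lemma~\ref{lem:qi_conditions}, which via Lemma~\ref{lem:def_distance} shows that any reduced word $s_1\cdots s_m$ in the generators satisfies $d(x_0,(s_1\cdots s_m)x_0)\ge m$. This is of course a ping-pong argument in Gromov-product form, and it gives freeness immediately (a nontrivial reduced word moves $x_0$, hence is nontrivial in $G$)---but note you must observe that the argument of Lemma~\ref{lem:qi_conditions} applies to \emph{every} reduced word in the free group on the generators, not just to geodesic words in $\Gamma(n)$, since a priori $|g|_{\Gamma(n)}$ could be shorter if there were relations. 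Your undistortion argument is exactly right; the finite-generation hypothesis on $G$ that you correctly note is tacitly assumed in the paper's statement as well.
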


We note that when $G$ itself is a hyperbolic group, Gilman, Miasnikov, and Osin have shown that 
a random $k$-generated subgroup of $G$ is free and undistorted \cite{gilman2010exponentially}. 
The novelty of Corollary \ref{c:random_free} is that it applies to a much larger class of groups; e.g. 
mapping class groups, outer automorphism groups of free 
groups, right-angled Artin groups, acylindrically hyperbolic groups, and others.
Our model of random subgroups appears in several other places in the literature: 
starting from Guivarc'h's random walk proof of the Tits alternative \cite{guivarch1990prod}, it is
explicitly formulated by Rivin \cite{rivin2010zariski}, and Aoun \cite{aoun2011random}, who 
proves that a random subgroup of a non-virtually solvable linear group is free and undistorted.
Moreover, Myasnikov and Ushakov \cite{MU08crypto} prove that random subgroups of pure braid groups are free, 
and provide applications of such results to cryptography. \\

As in the situation of the actions $\Mod(S) \curvearrowright \C(S)$ and $\Out(F) \curvearrowright \I$ (discussed in Sections \ref{sub:hyp_surface} and \ref{sub:hyp_free}), one is often particularly interested in those elements that act as loxodromic isometries (i.e. have positive translation length). It is immediate from Theorem \ref{th:random_qi} that, under the hypotheses of the theorem, every element of a random subgroup of $G$ is loxodromic.

 In the special case of a right-angled Artin group $A(\Gamma)$, Kim and Koberda have introduced the extension graph $\Gamma^e$, a hyperbolic graph which admits a nonelementary action $A(\Gamma) \curvearrowright \Gamma^e$ \cite{KimKoberda2013}. The loxodromic isometries of $A(\Gamma)$ with respect to this action are precisely those $g\in A(\Gamma)$ with cyclic centralizers. In \cite{KMT1}, the authors study the properties of \emph{purely loxodromic} subgroups of $A(\Gamma)$, i.e. the subgroups for which all nontrivial elements are loxodromic, and show their resemblance to convex cocompact subgroups of mapping class groups. From Theorem \ref{th:random_qi}, we have the following:

\begin{corollary}\label{c:random_lox}
Let $\Gamma$ be a finite simplicial graph that does not decompose as a nontrivial join. Then a random subgroup of $A(\Gamma)$ is purely loxodromic.
\end{corollary}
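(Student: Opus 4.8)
The plan is to apply Theorem~\ref{th:random_qi} directly. By the work of Kim--Koberda \cite{KimKoberda2013}, since the finite graph $\Gamma$ does not decompose as a nontrivial join, the right-angled Artin group $A(\Gamma)$ acts on the extension graph $\Gamma^e$, which is a hyperbolic (indeed quasi-tree) metric space, and this action is nonelementary: one can produce two elements of $A(\Gamma)$ acting loxodromically whose axes have four distinct endpoints on $\partial \Gamma^e$ (this is where the no-join hypothesis is used; when $\Gamma$ is a join, $A(\Gamma)$ has nontrivial center and the action degenerates). The space $\Gamma^e$ is countable, hence separable, so the hypotheses of Theorem~\ref{th:random_qi} are met for any nonelementary probability measure $\mu$ on $A(\Gamma)$.

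Next I would fix a basepoint $x_0 \in \Gamma^e$ and let $\Gamma(n) = \langle w^1_n, \dots, w^k_n\rangle$ be the subgroup generated by the $n$th steps of $k$ independent $\mu$-random walks. Theorem~\ref{th:random_qi} says that with probability tending to $1$ the orbit map $\Gamma(n) \to \Gamma^e$ is a quasi-isometric embedding. In particular, on this event the orbit map is a proper embedding, so every nontrivial $g \in \Gamma(n)$ satisfies $d_{\Gamma^e}(x_0, g^n x_0) \to \infty$, i.e. $g$ has positive translation length and is therefore a loxodromic isometry of $\Gamma^e$. (This is exactly the observation made in the paragraph preceding Corollary~\ref{c:random_lox}: under the hypotheses of Theorem~\ref{th:random_qi}, every element of a random subgroup acts loxodromically.) Hence with probability tending to $1$, every nontrivial element of $\Gamma(n)$ is loxodromic for the action on $\Gamma^e$.

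Finally I would invoke the characterization from \cite{KimKoberda2013}: an element $g \in A(\Gamma)$ is loxodromic on $\Gamma^e$ if and only if its centralizer in $A(\Gamma)$ is cyclic. Thus the event ``every nontrivial element of $\Gamma(n)$ is loxodromic'' is precisely the event ``$\Gamma(n)$ is purely loxodromic'' in the sense of \cite{KMT1}, and we have shown its probability tends to $1$ as $n \to \infty$. This is the statement that a random subgroup of $A(\Gamma)$ is purely loxodromic.

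I do not expect a serious obstacle; the only point requiring care is verifying nonelementarity of $A(\Gamma) \curvearrowright \Gamma^e$ precisely when $\Gamma$ is not a join, but this is already recorded in \cite{KimKoberda2013} (the loxodromic elements generate a nonelementary subgroup exactly in this case), so the argument is a short deduction from Theorem~\ref{th:random_qi} once that input is cited.
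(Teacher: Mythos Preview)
Your proposal is correct and is exactly the deduction the paper intends: the corollary is not given a separate proof in the paper but is declared immediate from Theorem~\ref{th:random_qi} together with the Kim--Koberda facts about $A(\Gamma)\curvearrowright\Gamma^e$ that you cite. One small tightening: the step ``proper embedding $\Rightarrow d(x_0,g^nx_0)\to\infty \Rightarrow$ positive translation length'' is not valid in general hyperbolic spaces (parabolics can have unbounded orbits), so you should instead use the full quasi-isometric embedding, which gives $d(x_0,g^nx_0)\ge K^{-1}n-C$ and hence translation length $\ge K^{-1}>0$ directly.
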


\subsection*{Acknowledgments} We would like to thank Joseph Maher for several useful conversations. The first named author is partially supported by NSF DMS-1204592.

\section{Background}
\subsection{Random walks} \label{sub:random} 
Let $\mu$ be a probability measure on a countable group $G$. 
In order to define a random walk on $G$, let us consider  a probability space $(\Omega, \mathbb{P})$, 
and  for each $n \in \mathbb{N}$, let $g_n : \Omega \to G$ be a $G$-valued random variable
such that the $g_n$'s are independent and identically distributed with distribution $\mu$.
We define the $n^{th}$ step of the random walk to be the variable $w_n : \Omega \to G$ 
$$w_n := g_1 g_2 \dots g_n.$$
The sequence $(w_n)_{n \in \mathbb{N}}$ is called a \emph{sample path} of the random walk. We denote as $\mu_n$ the distribution of $w_n$, which equals the n-fold convolution of $\mu$ with itself. That is, 
$$ \mu_n(x)=\P[w_n = x] = \sum_{x=g_1\ldots g_n} \mu(g_1) \cdot \ldots \cdot \mu(g_n),$$
which describes the probability that the $n^{th}$ step of the random walk lands at $x \in G$.

Moreover, the \emph{reflected measure} $\check{\mu}$ is defined as $\check{\mu}(g) := \mu(g^{-1})$. We note that for the random walk $(w_n)$, the distribution of $w_n^{-1}$, the inverse of the $n^{th}$ step of the random walk, is given by $(\check\mu)_n = \check{(\mu_n)}$. 
%%%%%%%%%%%distribution of \check{\mu}
\begin{comment}
To see this, observe
\begin{align*}
\P[w_n^{-1} = g ] &= \P[w_n = g^{-1} ] \\
&= \mu_n(g^{-1}) \\
&= \check{\mu}^n(g),
\end{align*}
where the last equality follows directly from the definitions. In details,
\begin{align*}
\mu_n(x) &= \sum_{x=g_1\ldots g_n} \mu(g_1) \cdot \ldots \cdot \mu(g_n) \\
&= \sum_{x^{-1}=g^{-1}_n \ldots g^{-1}_1} \check{\mu}(g_1^{-1})\cdot \ldots \cdot \check{\mu}(g_n^{-1}) \\
&= \check\mu ^n(x^{-1}).
\end{align*}
\end{comment}
%%%%%%%%%%%

Just as a random walk allows one to speak of a ``random'' element of $G$, we can use $k$ independent random walks to model a ($k$--generator) random subgroup of $G$. In details, fix a probability measure $\mu$ on $G$ and let $(w^1_n)_{n \in \N}, \ldots, (w^k_n)_{n \in \N}$ be $k$ independent random walks each of whose increments are distributed according to $\mu$. For each $n\in \N$, we can consider the subgroup generated by the $n^{th}$ steps of our sample paths,
$$\Gamma(n) = \langle w^1_n, \ldots, w^k_n \rangle \le G.$$ 
For example, if $G$ is finitely generated and $\mu$ is supported on a finite, symmetric generating set $S$ for $G$, then $\Gamma(n)$ is the subgroup generated by selecting $k$ (unreduced) words of length $n$ in the basis $S$ uniformly at random.

We remark that this model for a random subgroup of $G$ appears several places in the literature, 
see for example \cite{guivarch1990prod}, \cite{MU08crypto},  \cite{gilman2010exponentially}, \cite{rivin2010zariski}, and \cite{aoun2011random}.

Now suppose that $X$ is a separable hyperbolic space and that $G$ acts on $X$ by isometries. The action $G \curvearrowright X$ is \emph{nonelementary} if there are $g,h \in G$ which act loxodromically on $X$ and whose fixed point sets on the Gromov boundary of $X$ are disjoint. Recall that an isometry $g$ of $X$ is \emph{loxodromic}  if it has positive translation length on $X$, i.e. $\liminf_{n \to \infty} d(x_0,gx_0)/n > 0$ for some $x_0 \in X$. A probability measure $\mu$ on $G$ is said to be \emph{nonelementary} with respect to the action $G \curvearrowright X$ if the semigroup generated by the support of $\mu$ is a subgroup of $G$ whose action on $X$ is nonelementary. In this note, we are interested in the behavior of the image of a random walk $(w_n)$ under an orbit map $G \to X$. Hence, we fix once and for all a basepoint $x_0 \in X$ and consider the orbit map $G \to X$ given by $g \mapsto gx_0$. 

Central to Maher and Tiozzo's study of random walks on $G$ is the notion of shadows, which we now summarize. Given $x_0,x \in X$ and $R\ge0$, the \emph{shadow} $S_{x_0}(x,R) \subset X$ is by definition the set
$$S_{x_0}(x,R)  = \{y\in X: (x \cdot y)_{x_0} \ge d_X(x_0,x) -R\} .$$
When $R<0$, we declare that $S_{x_0}(x,R) = \emptyset$. The \emph{distance parameter} of the shadow $S_{x_0}(x,R)$ is the quantity $d(x_0,x) -R$, which is coarsely equal to the distance from $x_0$ to the shadow. It follows easily from hyperbolicity of $X$ that there is a constant $C$, depending only on the hyperbolicity constant of $X$, such that 
\begin{align}\label{ob:shadow}
(X \setminus S_{x_0}(x,R)) \subset S_x(x_0, d(x_0,x)-R+C).
\end{align} See \cite{MaherTiozzo} for details.

We will need two additional results from \cite{MaherTiozzo}. The first roughly states that the measure of a shadow decays to zero as the distance parameter goes to infinity. For the precise statement, set 
$$Sh(x_0,r) = \{S_{x_0}(gx_0,R) \ : \ g\in G \text{ and } d(x_0,gx_0) -R \ge r \}.$$
This is the set of shadows based at $x_0$ and centered at points in the orbit of $x_0$ with distance parameter at least $r$. The following is Corollary $5.3$ of \cite{MaherTiozzo}.

\begin{lemma}[Maher--Tiozzo] \label{lem:shadow_decay}
Let $G$ be a countable group which acts by isometries on a separable hyperbolic space $X$, and let $\mu$ be a nonelementary probability distribution on $G$. Then there is a function $f(r)$, with $f(r) \to 0$ as $r \to \infty$ such that for all $n$
\[
\sup_{S \in Sh(x_0,r)}
\left\{
\begin{array}{c} 
\mu_n(S)  \vspace{3pt}\\
\check{\mu}_n(S) 
\end{array} \right\} \le f(r).
\]
%$$\sup_{S \in Sh(x_0,r)}\mu_n(S) \le f(r). $$
\end{lemma}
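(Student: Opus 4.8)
The plan is to pass to the Gromov boundary $\partial X$, control the harmonic measure there, and then transfer the bound back to the step distributions $\mu_n$ by a persistence (non-backtracking) estimate that is uniform in $n$.

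First I would invoke (or re-establish) the two basic facts about nonelementary walks on $X$: almost every sample path $(w_n x_0)$ converges to a boundary point $w_\infty \in \partial X$, yielding the harmonic measure $\nu$ on $\partial X$ defined by $\nu(A) = \P[w_\infty \in A]$; and $\nu$ is nonatomic. Nonatomicity is a Furstenberg-type argument: $\nu$ is $\mu$-stationary, so if it had atoms then the finite, nonempty set of atoms of maximal mass would be invariant under the subgroup generated by $\mathrm{supp}(\mu)$, producing a finite orbit in $\partial X$ and contradicting the existence of two loxodromics with four distinct endpoints. With $\nu$ in hand, the first sub-goal is the boundary analogue of the lemma: $\sup \nu(\overline{S}\cap\partial X)\to 0$ over shadows $S=S_{x_0}(x,R)$ of distance parameter $\ge r$, as $r\to\infty$. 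This is geometric: in a hyperbolic space the boundary trace of such a shadow has visual diameter $O(e^{-\epsilon r})$, so all these traces shrink to points; since a finite nonatomic Borel measure on the compact space $\partial X$ satisfies $\sup_{\mathrm{diam}(B)\le\delta}\nu(B)\to 0$ as $\delta\to 0$ (otherwise continuity from above forces an atom), we obtain a decay function $f_\nu(r)\to 0$ with $\nu(\overline{S}\cap\partial X)\le f_\nu(r)$.

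The main obstacle is to convert this into a bound on $\mu_n(S)=\P[w_n x_0\in S]$ that does not deteriorate with $n$. Here I would condition on $w_n=g$ and use that the future increments form an independent walk $(v_m)$ with $v_\infty\sim\nu$ and $w_\infty = g\, v_\infty$. The key persistence estimate asserts a lower bound, uniform in $g$ and $n$, on the chance that the walk does not backtrack: fixing $C_1$ large enough that $f_\nu(C_1)\le\tfrac12$, the event $\{w_\infty\in S_{x_0}(gx_0,C_1)\}$ becomes, after applying the isometry $g^{-1}$ together with the identity $(gx_0\cdot g v_\infty)_{x_0}=d(x_0,gx_0)-(g^{-1}x_0\cdot v_\infty)_{x_0}$, the event $\{(g^{-1}x_0\cdot v_\infty)_{x_0}\le C_1\}$. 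Its complement is the boundary trace of a shadow of distance parameter exactly $C_1$, hence has $\nu$-measure $\le f_\nu(C_1)\le\tfrac12$, so $\P[w_\infty\in S_{x_0}(gx_0,C_1)\mid w_n=g]\ge\tfrac12$ uniformly. When moreover $gx_0\in S_{x_0}(x,R)$ with distance parameter $L\ge r$, the $\delta$-inequality for Gromov products gives $(x\cdot w_\infty)_{x_0}\ge\min\{L,\,L-C_1\}-\delta$, so $w_\infty$ lands in the enlarged boundary shadow $S_{x_0}(x,R+C_1+\delta)$, whose distance parameter is at least $r-C_1-\delta$.

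Summing over $g$ with $gx_0\in S$ and using independence of $v_\infty$ from $w_n$ then yields $\nu(\overline{S_{x_0}(x,R+C_1+\delta)}\cap\partial X)\ge\tfrac12\,\mu_n(S)$, whence $\mu_n(S)\le 2 f_\nu(r-C_1-\delta)$ by the first sub-goal. Taking $f(r):=2 f_\nu(r-C_1-\delta)$, which we may take non-increasing and bounded by $1$, gives the stated bound, uniformly in $n$ since neither $\nu$ nor the constant $\tfrac12$ depends on $n$. The bound for $\check{\mu}_n$ follows by running the identical argument for the reflected measure $\check\mu$, which is nonelementary for the same action because it generates the same subgroup of $G$; taking the maximum of the two decay functions completes the proof. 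I expect the persistence estimate of the third paragraph — extracting a non-backtracking lower bound that is uniform over the center of the shadow and over $n$ — to be the crux, with its correct combination with the Gromov-product inequalities being the most delicate routine point.
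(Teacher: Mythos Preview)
The paper does not prove this lemma at all; it is simply quoted as Corollary~5.3 of \cite{MaherTiozzo} and used as a black box. Your outline is essentially the argument carried out in that source: establish the stationary (harmonic) measure $\nu$ on $\partial X$, prove it is nonatomic, deduce uniform $\nu$-decay on boundary traces of shadows, and then lift this to $\mu_n$ via a persistence (non-backtracking) estimate. The conditioning step, the identity $(gx_0\cdot gv_\infty)_{x_0}+(g^{-1}x_0\cdot v_\infty)_{x_0}=d(x_0,gx_0)$, and the final Gromov-product chaining are all correct as you describe them.

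There is one genuine gap in your write-up. You assert that $\partial X$ is compact and deduce $\sup_{\mathrm{diam}(B)\le\delta}\nu(B)\to 0$ from nonatomicity; but under the paper's standing hypotheses $X$ is only separable, not proper, so $\partial X$ need not be compact, and this step fails as written. This is not a cosmetic issue: handling non-proper $X$ is precisely the point of the Maher--Tiozzo framework. The standard repair is to note that for separable Gromov-hyperbolic $X$ the boundary $\partial X$ is Polish in a visual metric, so the finite Borel measure $\nu$ is tight; given $\epsilon>0$ choose a compact $K\subset\partial X$ with $\nu(\partial X\setminus K)<\epsilon/2$, run your diameter-shrinking argument on $K$ to get $\nu(\overline{S}\cap K)<\epsilon/2$ once the distance parameter is large, and conclude $\nu(\overline{S})<\epsilon$. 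With this patch your proposal matches the approach of the cited reference.
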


Finally, we will need the fact that a random walk on $G$ whose increments are distributed according to a nonelementary measure has positive drift in $X$. This is Theorem $1.2$ of \cite{MaherTiozzo}.
\begin{theorem}[Maher--Tiozzo] \label{thm:drift}
Let $G$ be a countable group which acts by isometries on a separable hyperbolic space $X$, and let $\mu$ be a nonelementary probability distribution on $G$. Fix $x_0\in X$. Then, there is a constant $L>0$ such that for almost every sample path
$$\liminf_{n \to \infty}\frac{d(x_0,w_nx_0)}{n} = L >0.$$
\end{theorem}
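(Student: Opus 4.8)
The plan is to prove the two assertions separately: that $\liminf_{n\to\infty} d(x_0,w_n x_0)/n$ is almost surely equal to a constant $L$, and that this constant is strictly positive. For the first assertion I would work on the sequence space $\Omega=G^{\N}$ with the Bernoulli measure and the shift $\sigma$, writing $a_n(\omega)=d(x_0,w_n x_0)$. The isometry relation $d(x_0,g_1 y)=d(g_1^{-1}x_0,y)$ gives $|a_{n+1}(\omega)-a_n(\sigma\omega)|\le d(x_0,g_1 x_0)$, a bound independent of $n$; dividing by $n$ and letting $n\to\infty$ shows that $\Phi(\omega):=\liminf_n a_n(\omega)/n$ is $\sigma$-invariant. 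Since $\sigma$ is ergodic for the Bernoulli measure, $\Phi$ is almost surely a constant $L\in[0,\infty]$. (The cocycle relation $a_{n+m}(\omega)\le a_n(\omega)+a_m(\sigma^n\omega)$ means that, if $\mu$ has finite first moment, Kingman's subadditive ergodic theorem upgrades this to an honest limit with $L<\infty$; the positivity below needs no moment hypothesis, which is exactly why one cannot simply invoke Kingman.)

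The heart of the matter is to produce a pathwise linear lower bound $\Phi\ge\beta$ almost surely for some explicit $\beta>0$; combined with constancy this forces $L\ge\beta>0$. The only quantitative tool is the shadow decay of Lemma \ref{lem:shadow_decay}, and the geometric bookkeeping is carried out with the reversed-shadow inclusion \eqref{ob:shadow}. I would first record the elementary fact that any $z\in S_{x_0}(y,R)$ satisfies $d(x_0,z)\ge d(x_0,y)-R$, so that trapping the tail of the walk inside a shadow around a far point yields a permanent lower bound on $d(x_0,\cdot)$. The strategy is then a renewal/persistence scheme: choose $R$ and a block length so that, starting from $x_0$, with probability at least some $p>0$ the walk first travels a definite distance $D$ to a point $w_k x_0$ and thereafter never returns to a neighborhood of $x_0$, i.e. remains in $S_{x_0}(w_k x_0,R)$ for all later times. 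By \eqref{ob:shadow}, ``returning toward $x_0$'' means that the translated, hence freshly started, continuation enters a shadow pointing back at $x_0$ whose distance parameter is comparable to the large number $d(x_0,w_k x_0)$; making this improbable is exactly a shadow-decay estimate. Iterating via the strong Markov property produces stopping times $0=T_0<T_1<\cdots$ with independent, identically distributed, geometrically controlled gaps, along which the walk is confirmed to lie in a nested family of shadows whose distance parameters increase by $\ge D$ at each successful stage. Applying the strong law of large numbers to the gaps $T_{i+1}-T_i$ then gives $d(x_0,w_n x_0)\ge D\cdot i(n)-O(1)$ with $i(n)\sim n/\mathbb{E}[T_1]$, and hence $\Phi\ge D/\mathbb{E}[T_1]>0$.

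The step I expect to be the main obstacle is making the persistence probability uniformly positive, because the naive return estimate requires bounding the probability that the walk \emph{ever}, over all future times, re-enters a deep shadow, whereas Lemma \ref{lem:shadow_decay} only controls $\mu_n(S)$ for each fixed $n$, and summing these bounds over $n$ diverges. To close this gap I would prove an auxiliary all-time estimate: the probability that the sample path ever enters a shadow of distance parameter $r$ tends to $0$ as $r\to\infty$. This follows from Lemma \ref{lem:shadow_decay} by a first-entrance decomposition together with the convergence of the walk to a boundary point $\xi$, itself a Borel--Cantelli consequence of shadow decay. Indeed, a path that enters a deep shadow either has its limit $\xi$ in a slightly enlarged shadow, an event of small measure obtained by passing to the limit in Lemma \ref{lem:shadow_decay}, or else exits the shadow again, which the strong Markov property and the same estimate render rare. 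Once this uniform bound is in hand, the renewal argument goes through and yields the finite mean $\mathbb{E}[T_1]<\infty$ needed for the law of large numbers, completing the proof that $L>0$.
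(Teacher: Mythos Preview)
The paper does not prove Theorem~\ref{thm:drift}; it is quoted as Theorem~1.2 of \cite{MaherTiozzo} and used as a black box. There is therefore no proof in this paper against which to compare your proposal.

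Regarding the proposal on its own terms: the constancy argument via shift-invariance and ergodicity is correct and standard, and you have correctly isolated the real difficulty, namely upgrading the single-time shadow bound of Lemma~\ref{lem:shadow_decay} to an all-time persistence estimate. The gap is in how you propose to close it. You assert that almost sure convergence of $(w_n x_0)$ to a boundary point is ``a Borel--Cantelli consequence of shadow decay.'' It is not: Lemma~\ref{lem:shadow_decay} gives a bound $f(r)$ on $\mu_n(S)$ that is uniform in $n$ but is not claimed to be summable over $n$, so no direct Borel--Cantelli is available; and any attempt to show $(w_n x_0)$ is Gromov--Cauchy from shadow decay alone runs into the need to already know that $d(x_0,w_n x_0)\to\infty$, which is essentially what you are trying to prove. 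In \cite{MaherTiozzo} convergence to the boundary is a separate theorem whose proof goes through the horofunction compactification, the existence of a $\mu$-stationary measure there, and a martingale convergence argument---substantially more than Borel--Cantelli. Once boundary convergence is in hand, the \cite{MaherTiozzo} route to positive drift is also different from your renewal scheme: one shows that the Gromov product $(w_n x_0 \cdot w_\infty)_{x_0}$ grows sublinearly, which forces $d(x_0,w_n x_0)$ to grow linearly. Your renewal scheme may be salvageable, but as written it rests on a step you have not justified.
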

The constant $L >0$ in Theorem \ref{thm:drift} is called the \emph{drift} of the random walk $(w_n)$. 

\subsection{Hyperbolic extensions of surface groups}\label{sub:hyp_surface}
Here, we briefly recall some background on convex cocompact subgroups of mapping class groups. See \cite{FarbMosher, KentLein} for details.

Fix $S=S_g$, a closed, orientable surface of genus $g\ge2$. Associated to $S$ are its mapping class group $\Mod(S)$, its Teichm\"uller space $\T(S)$ (considered with the Teichm\"uller metric), and its curve graph $\C(S)$. We refer the reader to \cite{FM} for definitions and background on these standard objects in surface topology. We recall that there are natural actions $\Mod(S) \curvearrowright \T(S)$ and $\Mod(S) \curvearrowright \C(S)$; the former given by remarking and the latter given by the action of mapping classes on isotopy classes of simple closed curves (see remarks following Theorem \ref{th:cc} for details). While the action $\Mod(S) \curvearrowright \T(S)$ is properly discontinuous, $\T(S)$ is not negatively curved \cite{masur1975class, masur1994teichm}. On the other hand, $\C(S)$ is a locally infinite graph and the action $\Mod(S) \curvearrowright \C(S)$ has large vertex stabilizers, but $\C(S)$ is hyperbolic by the foundational work of Masur and Minsky \cite{MasurMinsky}. Much can be learned about the coarse geometry of $\Mod(S)$ by studying its action on both $\T(S)$ and $\C(S)$ in conjunction with the equivalent coarsely- Lipschitz map $\T(S) \to \C(S)$, which associates to each marked hyperbolic surface its collection of shortest curves.

In \cite{FarbMosher}, Farb and Mosher introduced convex cocompact subgroups of $\Mod(S)$ as those finitely generated subgroups $\Gamma \le \Mod(S)$ for which the orbit $\Gamma\cdot X \subset \T(S)$ of some $X \in \T(S)$ is quasiconvex with respect to the Teichm\"uller metric. Our interest in convex cocompact subgroups of $\Mod(S)$ comes from their connection to hyperbolicity of surface group extensions. This connection is summarized in the following theorem. For additional characterizations of convex cocompactness, see \cite{KentLein, DurhamTaylor}.

\begin{theorem}[Farb--Mosher, Kent--Leininger, Hamenst\"adt] \label{th:cc}
Let $\Gamma$ be a finitely generated subgroup of $\Mod(S)$. Then the following are equivalent
\begin{enumerate}
\item $\Gamma$ is convex cocompact,
\item the orbit map $\Gamma \to \C(S)$ is a quasi--isometric embedding, and
\item the extension $E_\Gamma$ is hyperbolic. 
\end{enumerate} 
\end{theorem}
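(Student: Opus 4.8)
The plan is to establish the cycle of implications $(1)\Rightarrow(2)\Rightarrow(3)\Rightarrow(1)$. The equivalence $(1)\Leftrightarrow(2)$ will be handled through the coarsely $\Mod(S)$-equivariant, coarsely Lipschitz \emph{systole map} $\Upsilon\colon \T(S)\to\C(S)$, which sends a marked hyperbolic surface to one of its shortest curves; the equivalence $(1)\Leftrightarrow(3)$ will go through the Bestvina--Feighn combination theorem and a partial converse to it. Throughout, the governing principle is that convex cocompactness of $\Gamma$ is equivalent to the Teichm\"uller geodesic between any two points of the orbit $\Gamma\cdot X$ being uniformly \emph{cobounded}, i.e.\ contained in an $\varepsilon$-thick part of $\T(S)$.

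For $(1)\Rightarrow(2)$: assuming $\Gamma\cdot X$ is $Q$-quasiconvex in $\T(S)$, a standard consequence is that its quasiconvex hull lies in $\T_\varepsilon(S)$ for some $\varepsilon>0$, so a Teichm\"uller geodesic between two orbit points is uniformly cobounded. By Masur--Minsky such a geodesic projects under $\Upsilon$ to an unparametrized quasigeodesic in $\C(S)$; coboundedness upgrades this to a parametrized quasigeodesic, since in the thick part Teichm\"uller distance and curve-graph distance are coarsely comparable. Hence the orbit makes linear progress in $\C(S)$ and $\Gamma\to\C(S)$ is a quasi-isometric embedding. For the converse $(2)\Rightarrow(1)$ (historically the subtler half, due to Kent--Leininger and Hamenst\"adt): given orbit points $gX,hX$, the endpoints of the Teichm\"uller geodesic $[gX,hX]$ are separated in $\C(S)$ by a distance proportional to $d_{\T}(gX,hX)$, and one must deduce that the geodesic is cobounded. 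This is where subsurface-projection technology enters: a Teichm\"uller geodesic lingering in a thin part of $\T(S)$ accrues Teichm\"uller distance while making only bounded progress in $\C(S)$, so the linear lower bound on $\C(S)$-distance forces the geodesic into the thick part; a cobounded geodesic between orbit points stays uniformly close to the orbit, giving quasiconvexity.

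For $(1)\Rightarrow(3)$: realize $E_\Gamma$, up to quasi-isometry, as a metric bundle over $\Gamma$ with fiber $\pi_1(S)$ (equivalently a flat $S_g$-bundle over the quasiconvex hull of $\Gamma\cdot X$), replacing $\pi_1(S)$-cosets by their Cayley graphs since surface groups are hyperbolic. By the Bestvina--Feighn combination theorem it suffices to verify the hallway-flaring condition: along a geodesic in $\Gamma$, word length in the $\pi_1(S)$-fibers grows exponentially in both directions. Since the corresponding path in $\T(S)$ is cobounded by the previous step, moving along it uniformly expands one measured lamination and contracts its transverse partner (Thurston's theory of the Teichm\"uller flow in the thick part); via the comparison between Teichm\"uller distance and hyperbolic length functions, this expansion/contraction is precisely the exponential flaring of fiber length, and the combination theorem yields hyperbolicity of $E_\Gamma$. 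For $(3)\Rightarrow(1)$ one runs this in reverse using a converse to the combination theorem: a hyperbolic group splitting as an $S_g$-bundle over a group must satisfy flaring (equivalently, the infinite normal subgroup $\pi_1(S)\trianglelefteq E_\Gamma$ of infinite index is exponentially distorted, forcing the fibers to flare), and flaring of the fibers translates back into coboundedness, hence quasiconvexity, of $\Gamma\cdot X$ in $\T(S)$.

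The main obstacle is the ``soft-to-rigid'' direction: extracting the rigid geometric conclusion of convex cocompactness from the purely coarse hypotheses that $\Gamma\to\C(S)$ is a quasi-isometric embedding, or that $E_\Gamma$ is hyperbolic. Concretely one must rule out that Teichm\"uller geodesics between orbit points dip deep into the thin part of $\T(S)$ (for $(2)\Rightarrow(1)$) and extract exponential flaring of $\pi_1(S)$-fibers solely from hyperbolicity of the total space (for $(3)\Rightarrow(1)$). Both require sharp control of how the thin parts of $\T(S)$ interact with $\C(S)$ --- essentially Rafi's combinatorial model and the subsurface-projection machinery of Masur--Minsky --- which is the technical heart and the reason these equivalences were settled only after the foundational work of Farb--Mosher.
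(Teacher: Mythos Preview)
The paper does not prove this theorem. Theorem~\ref{th:cc} is stated as background and immediately attributed to the literature: the paragraph following it records that $(3)\Rightarrow(1)$ and the free case of $(1)\Rightarrow(3)$ are due to Farb--Mosher, the general case of $(1)\Rightarrow(3)$ to Hamenst\"adt, and $(1)\Leftrightarrow(2)$ independently to Kent--Leininger and Hamenst\"adt. There is therefore no ``paper's own proof'' to compare your attempt against; the authors only need the implication $(2)\Rightarrow(3)$ as a black box for Theorem~\ref{th:surface_extensions}.

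That said, your outline is broadly faithful to how the cited papers proceed: the Bestvina--Feighn combination theorem (flaring) for $(1)\Rightarrow(3)$, the Farb--Mosher converse via distortion of $\pi_1(S)$ for $(3)\Rightarrow(1)$, and the systole map together with coboundedness of Teichm\"uller geodesics for $(1)\Leftrightarrow(2)$. Two small remarks. First, your announced plan is the cycle $(1)\Rightarrow(2)\Rightarrow(3)\Rightarrow(1)$, but what you actually sketch is $(1)\Leftrightarrow(2)$ and $(1)\Leftrightarrow(3)$; in particular you never argue $(2)\Rightarrow(3)$ directly. Second, the step ``quasiconvexity of $\Gamma\cdot X$ implies the hull lies in $\T_\varepsilon(S)$'' is not quite as automatic as you suggest---one needs that $\Gamma$ acts cocompactly on its orbit and that the orbit itself sits in a thick part, which uses discreteness of the $\Mod(S)$-action. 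These are cosmetic issues in a sketch; as a summary of the literature your write-up is accurate, but it is not something the present paper undertakes.
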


In Theorem \ref{th:cc}, the implication $(3) \implies (1)$ is due to Farb--Mosher, as is the converse $(1) \implies (3)$ when $\Gamma$ is free \cite{FarbMosher}. The general case of $(1) \implies (3)$ is due to Hamenst\"adt \cite{H}. Finally, the equivalence $(1) \iff (2)$ is due to Kent--Leininger \cite{KentLein} and, independently, Hamenst\"adt \cite{H}. To show that random subgroups of $\Mod(S)$ induce hyperbolic extensions of $\pi_1(S)$ (Theorem \ref{th:surface_extensions}), we only need the implication $(2) \implies (3)$ in Theorem \ref{th:cc}. For this, it suffices to know a few details about the action $\Mod(S) \curvearrowright \C(S)$, which we summarize here.

The curve graph $\C(S)$ is the graph whose vertices are isotopy classes of essential simple closed curve and whose edges join vertices that have disjoint representatives on $S$. As stated above, Masur--Minsky showed that $\C(S)$ is hyperbolic and that the loxodromic elements of the action $\Mod(S) \curvearrowright \C(S)$, i.e. those elements with positive translation length, are precisely the pseudo-Anosov mapping classes \cite{MasurMinsky}. From this, it easily follows that the action $\Mod(S) \curvearrowright \C(S)$ is nonelementary. In fact, it is known that the action satisfies the much stronger property of being acylindrical \cite{Bo}, however, we will not need this fact here.

\subsection{Hyperbolic extensions of free groups}\label{sub:hyp_free}
Here, we recall some background on hyperbolic extensions of free groups. See \cite{DT1} for additional detail.

Fix $F =F_N$, the free group of rank $N\ge 3$. In \cite{DT1}, Dowdall and the first author study conditions on $\Gamma \le \Out(F)$ which imply that the extension $E_\Gamma$ is hyperbolic. In this note, we require a (possibly weaker) version of their main theorem. We begin by describing the particular $\Out(F)$ analog of the curve graph that we will require. This is a version of the intersection graph $\mathcal{I}$; an $\Out(F)$--graph introduced by Kapovich and Lustig in \cite{kapovich2007geometric}. 

First, let $\mathcal{I}'$ be the graph whose vertices are conjugacy classes of $F$ and two vertices are joined by an edge if there is a very small simplicial tree $F \curvearrowright T$ in which each conjugacy class fixes a point. (Recall that a simplicial tree is very small if edge stabilizers are maximal cyclic and tripod stabilizers are trivial.) Define $\mathcal{I}$ to be the connected component of $\mathcal{I}'$ that contains the primitive conjugacy classes, i.e. those conjugacy classes that belong to some basis for $F$. Note that we have a simplicial action $\Out(F) \curvearrowright \I$. The following theorem appears in \cite{DT1}, where it is attributed to Brian Mann and Patrick Reynolds:

\begin{theorem}[Mann--Reynolds \cite{MR2}]\label{th:M-R}
The graph $\mathcal{I}$ is hyperbolic and $f \in \Out(F)$ acts with positive translation length on $\mathcal{I}$ if and only if $f$ is atoroidal and fully irreducible. %Moreover the action $\Out(F) \curvearrowright \mathcal{I}$ is WPD.
\end{theorem}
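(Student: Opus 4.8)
The plan is to recognize $\I$ as a coarse model for the \emph{cyclic splitting complex} $\mathcal{FZ}$ of $F$ --- the graph whose vertices are the minimal very small simplicial $F$--trees with trivial or maximal cyclic edge stabilizers, with an edge between two splittings that admit a common refinement --- and then to transport the known coarse geometry of $\mathcal{FZ}$ over to $\I$. Concretely, I would build an $\Out(F)$--equivariant quasi--isometry between $\I$ and $\mathcal{FZ}$; granting this, hyperbolicity of $\I$ is exactly Mann's hyperbolicity of $\mathcal{FZ}$, and the translation--length dichotomy is the transport of the corresponding dichotomy on $\mathcal{FZ}$ (an element is loxodromic on $\mathcal{FZ}$ if and only if it is fully irreducible and atoroidal).

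The maps are the evident coarse assignments: send a cyclic splitting $T$ to any conjugacy class that is elliptic in $T$ and lies in the component $\I$, and send a conjugacy class $\alpha \in \I$ to any cyclic splitting in which $\alpha$ is elliptic. What makes both assignments coarsely well defined is the observation hard--wired into the definition of $\I$: if two conjugacy classes are simultaneously elliptic in a single very small simplicial tree, then they are at distance at most $1$ in $\I$. Thus the set of conjugacy classes elliptic in a fixed $T$ has $\I$--diameter at most $1$, so the first assignment is defined up to error $1$; dually, one must show that the set of cyclic splittings in which a fixed $\alpha$ is elliptic has uniformly bounded $\mathcal{FZ}$--diameter, so that the second assignment is defined up to bounded error. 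Both maps are visibly $\Out(F)$--equivariant, and coarse--Lipschitzness is checked on edges: an edge of $\I$ is witnessed by a tree $T$ in which both endpoints are elliptic, pinning both images to a bounded neighborhood of (the $\mathcal{FZ}$--class of) $T$; and an edge of $\mathcal{FZ}$ is witnessed by a common refinement $\widehat T$, any elliptic of which is elliptic in both endpoints, so the two images are $\I$--close. The two composites are within bounded distance of the respective identities by the same ``common tree implies close'' principle, so the maps are coarse inverses.

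With the quasi--isometry in hand, the translation--length statement reduces to the $\mathcal{FZ}$ dichotomy, which one can in fact verify directly on $\I$. If $f$ is not fully irreducible then some power fixes a conjugacy class of proper free factor $A$; taking the free splitting $F = A * B$ and a primitive class $\beta$ supported in $B$, that power preserves the set of classes elliptic in $F = A*B$, a set of $\I$--diameter at most $1$, so $f$ has bounded $\I$--orbit. If $f$ is not atoroidal then some power fixes a nontrivial conjugacy class $w$; choosing a very small splitting in which both $w$ and some primitive class are elliptic (for instance one having $\langle w\rangle$, or a root of it, as a vertex group), $f$ again preserves a set of $\I$--diameter at most $1$ and so has bounded orbit. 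Conversely, a fully irreducible atoroidal $f$ is loxodromic on $\mathcal{FZ}$ --- the ping--pong argument of Mann--Reynolds, using that such $f$ admits neither an invariant free factor system nor an invariant conjugacy class --- hence loxodromic on the quasi--isometric $\I$.

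The main obstacle is the quasi--isometry itself, and inside it the lower Lipschitz bound together with the bounded--diameter claim for the set of $T$ in which a given $\alpha$ is elliptic: one must rule out that conjugacy classes which are $\I$--far can be forced co--elliptic in $\mathcal{FZ}$--close splittings, and control uniformly the ambiguity in the choice of a splitting compatible with a given conjugacy class. This is genuine work about the geometry of splittings, in the spirit of Handel--Mosher's analysis of the free splitting complex, and about how the distinguished component $\I$ sits inside the Kapovich--Lustig intersection graph \cite{kapovich2007geometric}; once it is in place, hyperbolicity of $\I$ and the loxodromic characterization are formal consequences.
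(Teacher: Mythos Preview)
The paper does not give a proof of this theorem: it is quoted as a result of Mann--Reynolds \cite{MR2}, attributed there via \cite{DT1}, and used as a black box in the rest of the argument. There is therefore no proof in the paper against which to compare your sketch.

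As for the sketch itself: the strategy of building an $\Out(F)$--equivariant quasi--isometry between $\I$ and the cyclic splitting complex $\mathcal{FZ}$, and then importing Mann's hyperbolicity and the Mann--Reynolds loxodromic classification, is a reasonable blueprint. But you are explicit that the quasi--isometry is where the content lies, and you do not supply it: the bounded--diameter claim for the set of cyclic splittings in which a fixed conjugacy class is elliptic, and the lower Lipschitz bound, are both left as ``genuine work.'' So what you have written is an outline of where a proof should go, not a proof.

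Your direct arguments for the non--loxodromic direction also need repair. If a power of $f$ fixes the conjugacy class of a proper free factor $A$, it need not preserve any particular splitting $F=A*B$, so the set of classes elliptic in that splitting is not $f$--invariant; you should instead use that the set of conjugacy classes supported in $A$ has $\I$--diameter at most $1$ and is preserved setwise. In the non--atoroidal case, the fixed conjugacy class $w$ need not lie in the distinguished component $\I$ at all (this is precisely why the paper passes to a component), so fixing $[w]$ does not immediately bound an $\I$--orbit; you must either show $[w]\in\I$ or produce an $f$--periodic vertex of $\I$ by some other means.
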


Although we will only require the statement of Theorem \ref{th:free_cc}, we recall that $f \in \Out(F)$ is fully irreducible if no positive power of $f$ fixes any conjugacy class of any free factor of $F$. Also, $f \in \Out(F)$ is atoroidal if no positive power fixes any conjugacy class of elements of $F$. If the extension $E_\Gamma$ of $\Gamma \le \Out(F)$ is hyperbolic then it is necessarily the case that each infinite order element of $\Gamma$ is atoroidal. The following appears as Theorem $9.2$ of \cite{DT1}.

\begin{theorem}[Dowdall--Taylor]\label{th:free_cc}
Let $\Gamma$ be a finitely generated subgroup of $\Out(F)$. Suppose that the orbit map $\Gamma \to \I$ is a quasi--isometric embedding for some $x_0 \in \I$. Then the corresponding extension $E_\Gamma$ is hyperbolic.
\end{theorem}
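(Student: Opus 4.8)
The approach, following the strategy of Dowdall and the first author in \cite{DT1}, is to realize $E_\Gamma$ as the total space of a hyperbolic bundle over $\Gamma$ with fibers modeled on $F$ and then invoke a combination theorem; this runs parallel to Mosher's argument in the surface setting underlying Theorem \ref{th:cc}. First I would unpack the hypothesis that the orbit map $\Gamma \to \I$ is a quasi-isometric embedding. Since $\I$ is hyperbolic (Theorem \ref{th:M-R}), this already forces $\Gamma$ to be word-hyperbolic: geodesic triangles in $\Gamma$ map to quasi-geodesic triangles in $\I$, which are uniformly thin by stability of quasi-geodesics (the Morse lemma), and thinness pulls back through a quasi-isometric embedding. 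Moreover, by the Mann--Reynolds description of loxodromics (Theorem \ref{th:M-R}), every infinite-order element of $\Gamma$ acts loxodromically on $\I$, hence is atoroidal; so $\Gamma$ is \emph{purely atoroidal}, which is precisely what rules out the obvious obstruction ($\ZZ^2$ subgroups arising from twist-like elements) to hyperbolicity of $E_\Gamma$.

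Next I would build a geometric model. Fixing finite generating sets, the Cayley graph of $E_\Gamma$ projects to that of $\Gamma$ with point-preimages the cosets of $F$, each quasi-isometric to $F$ with its word metric; moving along an edge of $\Gamma$ labeled by a generator $s$ changes the fiber identification by a fixed automorphism representing $s \in \Gamma \le \Out(F)$, up to the uniformly bounded error controlled by Cooper's bounded cancellation lemma. This exhibits $E_\Gamma$ as a metric bundle (in the sense of Mj--Sardar) over the hyperbolic base $\Gamma$ with hyperbolic (free) fibers. By a combination theorem for bundles over hyperbolic bases --- Bestvina--Feighn's combination theorem, or the Mj--Sardar hyperbolicity criterion --- the total space $E_\Gamma$ is hyperbolic as soon as the bundle satisfies the \emph{flaring condition}: roughly, that two ``parallel'' quasi-geodesics lying in the fibers over a common base geodesic cannot remain a bounded distance apart over a long sub-geodesic without the distance between them growing at least exponentially as one travels far enough in one of the two directions. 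Thus the whole problem reduces to verifying flaring.

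Establishing flaring is the crux. A geodesic in the base $\Gamma$ maps, via the quasi-isometric embedding, to a quasi-geodesic in $\I$, and hence (using the coarse comparison between $\I$ and the free factor complex $\FF$ from the work of Mann--Reynolds) to a quasi-geodesic in $\FF$. Using the folding-path technology for Outer space (Bestvina--Feighn, Handel--Mosher) together with the fact that forward progress in $\FF$ detects definite progress of folding paths, one shadows this quasi-geodesic by a folding path $\{G_t\}$ in Outer space whose composition of change-of-marking maps agrees, up to uniformly bounded error, with the accumulated automorphism along the base geodesic. The quantitative heart is then: along a folding path making linear progress in $\FF$, the $G_t$-length of any fixed conjugacy class $c$ --- equivalently the word length of the image of $c$ under the accumulated automorphism --- is multiplied by a definite factor $>1$ per unit of progress once $c$ has grown past a critical scale and resembles a train-track loop; this is the Perron--Frobenius expansion supplied by relative train track representatives of fully irreducible automorphisms, again combined with bounded cancellation to control how lengths behave under composition and how long two quasi-geodesics can stay close. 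Assembling these estimates, uniformly over all base geodesics, yields the required exponential flaring, and the combination theorem then delivers hyperbolicity of $E_\Gamma$.

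The main obstacle is precisely this last step --- the flaring estimate --- where one must upgrade the purely coarse input ``the base geodesic makes linear progress in $\FF$'' to the metric conclusion ``word lengths of conjugacy classes in the fiber diverge exponentially.'' This is where essentially all of the $\Out(F)$-specific machinery is deployed, and the features that make it genuinely hard, rather than a routine transcription of the single-automorphism case (Brinkmann's theorem on mapping tori of atoroidal fully irreducibles), are the non-uniqueness of folding paths and the need for every estimate to be uniform over the whole of $\Gamma$ rather than attached to one fixed train track map.
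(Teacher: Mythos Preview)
The paper does not contain a proof of this statement: Theorem~\ref{th:free_cc} is quoted verbatim as Theorem~9.2 of \cite{DT1} and used as a black box in the proof of Theorem~\ref{th:free_extensions}. So there is no ``paper's own proof'' to compare against; what you have written is an outline of the argument in the cited reference rather than an alternative to anything in the present paper.

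That said, your sketch is an accurate high-level summary of the Dowdall--Taylor strategy in \cite{DT1}: deduce word-hyperbolicity of $\Gamma$ from the quasi-isometric embedding into the hyperbolic graph $\I$, observe that every infinite-order element is loxodromic and hence (by Theorem~\ref{th:M-R}) atoroidal and fully irreducible, model $E_\Gamma$ as a metric bundle over $\Gamma$ with free fibers, and reduce hyperbolicity to a flaring condition via a Bestvina--Feighn/Mj--Sardar combination theorem. You also correctly identify the genuine content as the flaring estimate, obtained by shadowing base geodesics with folding paths in Outer space and extracting uniform exponential growth of conjugacy-class lengths from progress in the factor complex. One small refinement: in \cite{DT1} the primary hyperbolic complex is the (co-)free factor complex $\FF$, and the statement for $\I$ is deduced by showing that a quasi-isometric orbit map into $\I$ forces one into $\FF$; your parenthetical about the comparison between $\I$ and $\FF$ is therefore not a side remark but the actual mechanism by which Theorem~9.2 is obtained from the main result of \cite{DT1}. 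As a proof to be read on its own, your write-up is of course only a roadmap --- the uniform flaring over all of $\Gamma$ is where the serious work lies and cannot be filled in without the full machinery of \cite{DT1} --- but as a description of the approach it is faithful.
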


Just as in the situation of the mapping class group acting on the curve graph, it follows from Theorem \ref{th:M-R} that the action $\Out(F) \curvearrowright \I$ is nonelementary, i.e. there exists a pair loxodromic elements with no common fixed points on $\partial \I$. In fact, according to Mann--Reynolds \cite{MR2}, the action $\Out \curvearrowright \I$ satisfies the stronger property of being WPD.

\section{Proof of Theorem \ref{th:random_qi}}
We begin by providing conditions for when the orbit map from a $k$-generator group into a hyperbolic space is a quasi--isometric embedding. We require the following well-known lemma; see, for example, \cite{Gromov,GhysHarpe, gilman2010exponentially}. First, recall that for $x,y,z \in X$, the \emph{Gromov product}, denoted $(y \cdot z)_x$, is defined as 
$$(y \cdot z)_x = \frac{1}{2}(d(x,y)+d(x,z) - d(y,z)). $$
When $X$ is hyperbolic, the Gromov product $(y \cdot z)_z$ coarsely agrees with the distance from $x$ to any geodesic between $y$ and $x$. See \cite{BH} for details.

\begin{lemma}\label{lem:def_distance}
Let $X$ be a $\delta$-hyperbolic metric space with points $p_0, \ldots, p_n$ satisfying
\begin{align} \label{ineq:def_distance}
\min{\{d(p_{i-1},p_i) , d(p_i,p_{i+1})\}} \ge 2(p_{i-1}\cdot p_{i+1})_{p_i} + 18\delta +1.
\end{align}
Then, $d(p_0,p_n) \ge n$.
\end{lemma}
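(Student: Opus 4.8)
The plan is to prove this by induction on $n$, using hyperbolicity to show that the broken geodesic path $p_0, p_1, \ldots, p_n$ cannot backtrack too much, so that $d(p_0, p_n)$ grows linearly. The inequality \eqref{ineq:def_distance} is exactly the quantitative "no backtracking" hypothesis needed: it says that at each intermediate point $p_i$, the incoming and outgoing segments make a wide enough angle (in the coarse sense measured by the Gromov product) relative to $\delta$.

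The key step I would carry out is the following claim, proved by induction on $i$: for each $i$ with $1 \le i \le n$, one has
\begin{align*}
(p_0 \cdot p_{i+1})_{p_i} \le (p_{i-1} \cdot p_{i+1})_{p_i} + 4\delta
\end{align*}
(or a similar bound with a small multiple of $\delta$), together with $d(p_0, p_i) \ge i$. The mechanism is the standard hyperbolic inequality relating Gromov products at a common basepoint: for any four points, $(x \cdot z)_w \ge \min\{(x\cdot y)_w, (y \cdot z)_w\} - \delta$. Applying this with $w = p_i$, $x = p_0$, $y = p_{i-1}$, $z = p_{i+1}$ gives control of $(p_0 \cdot p_{i+1})_{p_i}$ in terms of $(p_0 \cdot p_{i-1})_{p_i}$ and the hypothesized small quantity $(p_{i-1}\cdot p_{i+1})_{p_i}$. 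One then needs to transport the inductive information about $(p_0 \cdot p_{i-1})_{p_{i-1}}$ being small (which is what the induction hypothesis at stage $i-1$ provides, roughly) to a statement at the basepoint $p_i$; this uses the fact that moving the basepoint along a segment of length $d(p_{i-1},p_i)$ changes the relevant Gromov products in a controlled way, and here is where the lower bound $\min\{d(p_{i-1},p_i), d(p_i,p_{i+1})\} \ge 2(p_{i-1}\cdot p_{i+1})_{p_i} + 18\delta + 1$ gets used: it guarantees the segments are long enough that the "good" estimate at $p_{i-1}$ survives the shift to $p_i$ with room to spare. Once $(p_0 \cdot p_{i+1})_{p_i}$ is known to be small (bounded by a fixed multiple of $\delta$, much less than $\tfrac12 d(p_i, p_{i+1})$), the triangle-style identity $d(p_0, p_{i+1}) = d(p_0,p_i) + d(p_i,p_{i+1}) - 2(p_0 \cdot p_{i+1})_{p_i}$ combined with $d(p_i,p_{i+1}) \ge 18\delta + 1$ and the inductive bound $d(p_0,p_i)\ge i$ yields $d(p_0, p_{i+1}) \ge i+1$, closing the induction.

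The main obstacle, and the step requiring the most care, is the bookkeeping of the constants: one must choose the auxiliary bound on $(p_0 \cdot p_{j+1})_{p_j}$ (the quantity propagated through the induction) so that it is simultaneously (a) large enough to be preserved under the $\min\{\cdot,\cdot\} - \delta$ hyperbolicity inequality at each step, (b) large enough to absorb the error from shifting the basepoint from $p_{i-1}$ to $p_i$, and (c) strictly smaller than roughly half of $18\delta + 1$ plus the hypothesized segment lengths, so that the final distance estimate gains a full unit at each stage. Verifying that the numerical slack in $18\delta + 1$ and in the coefficient $2$ on $(p_{i-1}\cdot p_{i+1})_{p_i}$ is exactly enough to make all three demands compatible is routine but is the crux; everything else is a direct application of the two standard $\delta$-hyperbolic inequalities (the four-point Gromov product inequality and the approximate additivity of distance along a path with small Gromov products).
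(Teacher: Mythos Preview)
The paper does not actually give a proof of this lemma: it is introduced as ``the following well-known lemma; see, for example, \cite{Gromov,GhysHarpe,gilman2010exponentially}'' and then used as a black box. So there is no in-paper argument to compare against.

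Your outline is the standard inductive proof one finds in those references (notably in Ghys--de la Harpe and in Gilman--Miasnikov--Osin), and the structure you describe is correct: one carries through the induction a bound of the form $(p_0\cdot p_{i+1})_{p_i}\le (p_{i-1}\cdot p_{i+1})_{p_i}+c\delta$ for a fixed small $c$, uses the identity $(p_0\cdot p_{i-1})_{p_i}=d(p_{i-1},p_i)-(p_0\cdot p_i)_{p_{i-1}}$ together with the hypothesis \eqref{ineq:def_distance} to force $(p_0\cdot p_{i-1})_{p_i}$ to exceed $(p_{i-1}\cdot p_{i+1})_{p_i}+\delta$, and then the four-point inequality closes the induction. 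The distance bound $d(p_0,p_{i+1})\ge i+1$ then follows from $d(p_0,p_{i+1})=d(p_0,p_i)+d(p_i,p_{i+1})-2(p_0\cdot p_{i+1})_{p_i}$ exactly as you say. Your caveat about the constant bookkeeping is accurate: that is where the specific coefficients $2$ and $18\delta+1$ enter, and verifying that they suffice is routine once the inductive invariant is fixed.
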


Lemma \ref{lem:def_distance} easily implies the following:
\begin{lemma}\label{lem:qi_conditions}
Let $X$ be a $\delta$-hyperbolic space and, for $1\le i\le k$, let $g_i \in \mathrm{Isom}(X)$ be isometries of $X$ such that for some $x_0\in X$ we have
\begin{align}\label{ineq:assumption}
d(x_0, g_ix_0) \ge 2(g_j^{\pm 1}x_0 \cdot g_l^{\pm 1}x_0)_{x_0} +18\delta +1
\end{align}
for all $1\le i,j,l \le k$ except when $j=l$ and the exponent on the $g_j$ and $g_l$ are the same.
Then the orbit map $ \langle g_1, \ldots, g_k \rangle \to X$ given by $g \mapsto g x_0$ is a quasi-isometric embedding.
\end{lemma}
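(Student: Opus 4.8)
The plan is a ping--pong / local-to-global argument: I will show that for any reduced word $w$ in the letters $g_1^{\pm 1},\dots,g_k^{\pm 1}$, the successive orbit points $x_0,\, g_{i_1}^{\epsilon_1}x_0,\, g_{i_1}^{\epsilon_1}g_{i_2}^{\epsilon_2}x_0,\dots$ satisfy the hypothesis of Lemma \ref{lem:def_distance}, so that the length of $w$ becomes a lower bound for $d(x_0, wx_0)$.

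Write $w = g_{i_1}^{\epsilon_1}\cdots g_{i_n}^{\epsilon_n}$ with each $\epsilon_m\in\{\pm 1\}$ and no letter immediately followed by its inverse, and set $p_0 = x_0$ and $p_m = (g_{i_1}^{\epsilon_1}\cdots g_{i_m}^{\epsilon_m})x_0$ for $1\le m\le n$. Since isometries preserve distances and Gromov products, applying the isometry $(g_{i_1}^{\epsilon_1}\cdots g_{i_{m-1}}^{\epsilon_{m-1}})^{-1}$ (and, for the Gromov product, a further translation by $g_{i_m}^{-\epsilon_m}$), together with the identity $d(x_0, gx_0) = d(x_0, g^{-1}x_0)$, gives for each interior index $1\le m\le n-1$
\[
d(p_{m-1}, p_m) = d(x_0, g_{i_m}x_0), \qquad d(p_m, p_{m+1}) = d(x_0, g_{i_{m+1}}x_0),
\]
\[
(p_{m-1}\cdot p_{m+1})_{p_m} = \bigl(g_{i_m}^{-\epsilon_m}x_0 \cdot g_{i_{m+1}}^{\epsilon_{m+1}}x_0\bigr)_{x_0}.
\]
The crucial point is that hypothesis \eqref{ineq:assumption} is applicable to this last Gromov product: its excluded case is exactly ``$i_m = i_{m+1}$ and the exponents $-\epsilon_m$ and $\epsilon_{m+1}$ coincide'', i.e.\ $g_{i_{m+1}}^{\epsilon_{m+1}} = (g_{i_m}^{\epsilon_m})^{-1}$, which is precisely what it means for $w$ to fail to be reduced at position $m$. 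Invoking \eqref{ineq:assumption} once with $i = i_m$ and once with $i = i_{m+1}$ therefore shows that $p_0,\dots,p_n$ satisfy \eqref{ineq:def_distance}, so Lemma \ref{lem:def_distance} gives $d(x_0, wx_0) = d(p_0, p_n)\ge n$. (The degenerate cases $n = 0, 1$ are immediate; for $n=1$ note that \eqref{ineq:assumption}, applied with $j=l=i$ but opposite exponents, already forces $d(x_0, g_i x_0)\ge 18\delta + 1 \ge 1$, using that Gromov products are nonnegative.)

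To finish, observe that $d(x_0, wx_0)\ge n\ge 1$ for every nontrivial reduced word $w$, so the natural surjection of the free group of rank $k$ onto $\langle g_1,\dots,g_k\rangle$ is injective, hence an isomorphism. Consequently the word length $\|g\|$ of $g\in\langle g_1,\dots,g_k\rangle$ equals the length of its unique reduced word, whence $d(x_0, gx_0)\ge \|g\|$; combined with the triangle-inequality bound $d(x_0, gx_0)\le \|g\|\cdot\max_i d(x_0, g_i x_0)$ and the fact that $\max_i d(x_0, g_i x_0) > 0$ (again from \eqref{ineq:assumption}), this exhibits the orbit map $g\mapsto gx_0$ as a quasi-isometric embedding.

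I do not expect a genuine obstacle: the argument is essentially bookkeeping. The one place demanding care is tracking the exponents $\epsilon_m$ through the isometric translations, in order to confirm both that the Gromov product controlling the vertex $p_m$ is exactly $\bigl(g_{i_m}^{-\epsilon_m}x_0 \cdot g_{i_{m+1}}^{\epsilon_{m+1}}x_0\bigr)_{x_0}$ and that the ``reduced'' hypothesis on $w$ is exactly the negation of the exceptional clause of \eqref{ineq:assumption}, so that \eqref{ineq:assumption} can legitimately be applied at each interior vertex.
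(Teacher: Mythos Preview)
Your argument is correct and follows essentially the same route as the paper: define the partial-product points $p_m$, use isometry invariance to rewrite the relevant distances and Gromov products at $x_0$, observe that reducedness of the word is exactly what avoids the excluded case of \eqref{ineq:assumption}, and invoke Lemma~\ref{lem:def_distance}. The only difference is cosmetic: the paper works directly with the word length $|g|_\Gamma$ in $\Gamma$ and notes that the orbit map is automatically coarsely Lipschitz, whereas you first deduce freeness and then supply the Lipschitz bound explicitly; both are fine.
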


\begin{proof}
Set $\Gamma = \langle g_1 \ldots, g_k \rangle$. As the orbit map $\Gamma \to X$ is always coarsely Lipschitz, it suffices to prove that for any $g \in \Gamma$,
$$|g|_\Gamma \le d(x_0,gx_0). $$
To see this, write $g= s_0 \ldots s_n$ as a reduced word where $s_i \in  \{g_0^{\pm 1}, \ldots, g_k^{\pm 1}\}$ and $n = |g|_\Gamma$. Letting $p_i = (s_0s_1\ldots s_i) x_0$, we note that by Lemma \ref{lem:def_distance} it suffices to show that Inequality \eqref{ineq:def_distance} holds for these points. Observe that since the action of $\Gamma$ is by isometries on $X$,
\begin{align*}
\min{\{d(p_{i-1},p_i) , d(p_i,p_{i+1})\}} &= \min\{d(x_0,s_ix_0), d(x_0,s_{i+1}x_0) \} \\
&\ge  2(s_i^{-1}x_0 \cdot s_{i+1} x_0)_{x_0} +18\delta +1\\
&= 2(p_{i-1} \cdot  p_{i+1})_{p_i} +18\delta +1
\end{align*}
where the first inequality holds by \eqref{ineq:assumption} and the fact that $s_i \neq s_{i+1}^{-1}$. This completes the proof.
\end{proof}

\begin{lemma}\label{lem:ind}
Let $G$ be a countable group with a nonelementary action by isometries on a hyperbolic space $X$. Let $\mu$ be a nonelementary probability measure on $G$ and fix $x_0 \in X$. Suppose that $(w_n)$ and $(u_n)$ be independent random walks on $G$ whose increments are distributed according to $\mu$. Then 
\begin{align}
\mathbb{P}[(w_n^{\pm1}x_0 \cdot u_n^{\pm1}x_0)_{x_0} \le l(n)] \to 1,  \label{ind}
\end{align}
as $n \to \infty$. Here, $l:\mathbb{N} \to \mathbb{N}$ is any function with $l(n)\to \infty$ as $n \to \infty$.
\end{lemma}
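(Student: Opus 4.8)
The plan is to bound the Gromov product $(w_n^{\pm 1} x_0 \cdot u_n^{\pm 1} x_0)_{x_0}$ by showing that at least one of the two orbit points lies outside a large shadow centered on the other, and then to invoke Lemma \ref{lem:shadow_decay} together with the independence of the two walks. Recall that by hyperbolicity the Gromov product $(x \cdot y)_{x_0}$ being large is essentially the statement that $y$ lies in a shadow $S_{x_0}(x, R)$ with small $R$, i.e. with large distance parameter. So I would first translate the event $\{(w_n^{\pm 1}x_0 \cdot u_n^{\pm 1}x_0)_{x_0} > l(n)\}$ into the statement that $u_n^{\pm 1} x_0 \in S_{x_0}(w_n^{\pm 1} x_0, R)$ for some $R$ with distance parameter $d(x_0, w_n^{\pm 1} x_0) - R$ coarsely at least $l(n)$ (up to an additive hyperbolicity constant). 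The one subtlety here is that shadows as defined are centered at orbit points $g x_0$; since $w_n^{\pm 1}$ and $u_n^{\pm 1}$ are group elements, $w_n^{\pm 1} x_0$ and $u_n^{\pm 1} x_0$ are genuinely in the orbit of $x_0$, so this causes no trouble.

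Next I would condition on the path $(w_n)$ and use independence: given the first walk, the position of the second walk $u_n^{\pm 1} x_0$ is governed by the measure $\mu_n$ (for the $+$ case) or $\check\mu_n$ (for the $-$ case, using the remark in Section \ref{sub:random} that the distribution of $u_n^{-1}$ is $\check\mu_n$). Thus the conditional probability that $u_n^{\pm 1} x_0$ lands in the relevant shadow $S$ of $w_n^{\pm 1} x_0$ is $\mu_n(S)$ or $\check\mu_n(S)$. To apply Lemma \ref{lem:shadow_decay} I need this shadow to have large distance parameter — specifically distance parameter at least some $r(n) \to \infty$ — which reduces to a lower bound on $d(x_0, w_n^{\pm 1} x_0)$. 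Here I invoke Theorem \ref{thm:drift}: with probability tending to $1$, $d(x_0, w_n x_0) \ge \tfrac{L}{2} n$, say, and since $d(x_0, w_n^{-1} x_0) = d(w_n x_0, x_0)$ the same bound holds for the inverse walk. On this high-probability event the shadow in question has distance parameter $\ge \tfrac{L}{2}n - l(n) - C$, which goes to infinity as long as $l(n) = o(n)$; if $l(n)$ grows faster than linearly the conclusion is only easier, but in any case one can simply replace $l(n)$ by $\min\{l(n), \sqrt n\}$, which still tends to infinity, so without loss of generality $l(n) = o(n)$.

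Putting these together: decompose $\mathbb{P}[(w_n^{\pm 1}x_0 \cdot u_n^{\pm1}x_0)_{x_0} > l(n)]$ according to whether the drift event for $(w_n)$ holds. On the complement of the drift event the probability is $o(1)$; on the drift event, conditioning on $(w_n)$ and applying Lemma \ref{lem:shadow_decay} with $r(n) = \tfrac{L}{2}n - l(n) - C \to \infty$ bounds the conditional probability by $f(r(n)) \to 0$, uniformly over the choice of $w_n$. Taking expectations over $(w_n)$ and summing over the finitely many sign choices $\pm$ gives that the whole probability tends to $0$, which is exactly \eqref{ind}. The main obstacle — really the only point requiring care — is the bookkeeping that converts the Gromov-product event into a shadow with a \emph{controlled} distance parameter and checks that the center of that shadow is an orbit point with the right distance lower bound; once that translation is set up cleanly, Lemma \ref{lem:shadow_decay}, Theorem \ref{thm:drift}, and independence do all the work.
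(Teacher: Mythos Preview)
Your overall strategy --- rewrite the Gromov-product event as a shadow event, condition on one walk, and use independence together with Lemma~\ref{lem:shadow_decay} --- is exactly the paper's approach, and your first paragraph sets it up correctly: the event $\{(w_n^{\pm1}x_0 \cdot u_n^{\pm1}x_0)_{x_0} > l(n)\}$ is precisely the event that $u_n^{\pm1}x_0$ lies in the shadow $S_{x_0}(w_n^{\pm1}x_0, R)$ with $R = d(x_0,w_n^{\pm1}x_0)-l(n)$, and this shadow has distance parameter exactly $l(n)$.

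The detour through Theorem~\ref{thm:drift} in your second and third paragraphs is unnecessary, and in fact contains a genuine error. You write that obtaining a large distance parameter ``reduces to a lower bound on $d(x_0, w_n^{\pm1}x_0)$'' and later that, on the drift event, ``the shadow in question has distance parameter $\ge \tfrac{L}{2}n - l(n) - C$''. Neither is correct: by the choice of $R$ you made in the first paragraph, the distance parameter equals $l(n)$ regardless of how far $w_n^{\pm1}x_0$ is from $x_0$. Under your standing assumption $l(n)=o(n)$ one has $l(n) < \tfrac{L}{2}n - l(n) - C$ for large $n$, so the shadow does \emph{not} lie in $Sh(x_0,\tfrac{L}{2}n - l(n) - C)$, and the application of Lemma~\ref{lem:shadow_decay} with that value of $r$ is invalid.

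The fix is simply to delete the drift argument: since the distance parameter is already $l(n)\to\infty$, Lemma~\ref{lem:shadow_decay} applied with $r=l(n)$ gives the bound $f(l(n))\to 0$ directly, uniformly over the conditioning. This is precisely the paper's proof; Theorem~\ref{thm:drift} plays no role here (it is needed only in Lemma~\ref{lem:inverse}, where the two points $w_nx_0$ and $w_n^{-1}x_0$ are \emph{not} independent). The reduction to $l(n)=o(n)$ is then also unneeded.
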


\begin{proof}
First note that since $(w_n)$ and $(u_n)$ are independent random walks with increments distributed according to $\mu$, both $w_n$ and $u_n$ have distribution $\mu_n$. Moreover, $w_n^{-1}$ and $u_n^{-1}$, the inverses of the $n^{th}$ steps, have distribution $\check{\mu}_n$ as noted at the beginning of Section \ref{sub:random}. Since the proofs of \eqref{ind} in each of the $4$ possible cases are identical, we show 
$$\mathbb{P}[(w_nx_0 \cdot u_n^{-1}x_0)_{x_0} \le l(n)] \to 1, $$
as $n \to \infty$.

By setting $R_n = d(x_0,u_nx_0) - l(n)$, we have
$$\mathbb{P}[(w_nx_0 \cdot u_n^{-1}x_0)_{x_0} \le l(n)] = 1 - \mathbb{P}[w_nx_0 \in S_{x_0}(u_n^{-1}x_0,R_n) ],$$
where the shadow $S_{x_0}(u_nx_0,R_n)$ has distance parameter $l(n)$. As $w_n$ and $u_n^{-1}$ are independent with distributions $\mu_n$ and $\check{\mu}_n$, respectively, we have that 
\begin{align*}
\mathbb{P}[w_nx_0 \in S_{x_0}(u_n^{-1}x_0,R_n) ] &= \sum_{g\in G}\mathbb{P}[w_nx_0 \in S_{x_0}(u_n^{-1}x_0,R_n) \ | \ u_n^{-1} =g ] \cdot \check{\mu}_n(g)  \\
&= \sum_{g\in G} \mu_n(S_{x_0}(gx_0,R_n)) \check{\mu}_n(g) \\
& \le f(l(n)),
\end{align*}
where the last inequality uses the decay of shadows (Lemma \ref{lem:shadow_decay}). Since $f(l(n)) \to 0$ as $n \to \infty$, the lemma follows.
\end{proof}

\begin{lemma}\label{lem:inverse}
Let $G$ be a countable group with a nonelementary action by isometries on a hyperbolic space $X$. Let $\mu$ be a nonelementary probability measure on $G$ and fix $x_0 \in X$. Suppose that $(w_n)$ is a random walk on $G$ whose increments are distributed according to $\mu$. Then 
$$\mathbb{P}[(w_nx_0 \cdot w_n^{-1}x_0)_{x_0} \le l(n)] \to 1, $$
as $n \to \infty$. Here, $l:\mathbb{N} \to \mathbb{N}$ is any function with $l(n)\to \infty$ as $n \to \infty$ and $\limsup \frac{l(n)}{n} < \frac{L}{2}$, 
where $L$ is the drift of $(w_n)$.
\end{lemma}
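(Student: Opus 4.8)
The plan is to exploit the fact that $(w_n x_0 \cdot w_n^{-1}x_0)_{x_0}$ is controlled by how far $w_n x_0$ and $w_n^{-1}x_0$ both travel away from $x_0$, compared to how close the geodesic between them passes to $x_0$. Recall that the Gromov product $(w_nx_0 \cdot w_n^{-1}x_0)_{x_0}$ is, by definition, $\frac{1}{2}\bigl(d(x_0,w_nx_0) + d(x_0,w_n^{-1}x_0) - d(w_nx_0, w_n^{-1}x_0)\bigr)$. Since $w_n$ acts by isometries, $d(x_0, w_n^{-1}x_0) = d(w_n x_0, x_0) = d(x_0, w_n x_0)$, so the Gromov product equals $d(x_0,w_nx_0) - \tfrac12 d(w_nx_0,w_n^{-1}x_0)$. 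Thus the event $(w_nx_0\cdot w_n^{-1}x_0)_{x_0} \le l(n)$ is precisely the event $d(w_nx_0, w_n^{-1}x_0) \ge 2 d(x_0,w_nx_0) - 2l(n)$.

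The first step is to pass to the quantity $d(w_n x_0, w_n^{-1}x_0)$. Applying the isometry $w_n^{-1}$ (or $w_n$), we get $d(w_nx_0, w_n^{-1}x_0) = d(x_0, w_n^{-2}x_0)$, and $w_n^{-2} = (g_1\cdots g_n)^{-2}$ is the $2n$-th step of a random walk whose increments are, in order, the reflected variables $g_n^{-1},\dots,g_1^{-1},g_n^{-1},\dots,g_1^{-1}$ — these are i.i.d.\ with law $\check\mu$. Since $\check\mu$ is also nonelementary (the support of $\check\mu$ generates the same subgroup acting nonelementarily), Theorem \ref{thm:drift} applies: for a.e.\ sample path, $d(x_0, w_n^{-2}x_0)/(2n) \to L$, hence $d(x_0,w_n^{-2}x_0)/n \to 2L$ almost surely, and in particular in probability. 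Meanwhile, again by Theorem \ref{thm:drift}, $d(x_0,w_nx_0)/n \to L$ almost surely, so $2d(x_0,w_nx_0)/n \to 2L$ as well.

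The second step is to combine these. We want $\mathbb{P}\bigl[d(x_0,w_n^{-2}x_0) \ge 2d(x_0,w_nx_0) - 2l(n)\bigr] \to 1$. Dividing by $n$, this is the event $\frac{d(x_0,w_n^{-2}x_0)}{n} \ge \frac{2d(x_0,w_nx_0)}{n} - \frac{2l(n)}{n}$. By hypothesis $\limsup \frac{2l(n)}{n} < L$, so pick $\epsilon > 0$ with $\limsup \frac{2l(n)}{n} < L - 3\epsilon$ and $n$ large. On the high-probability event where simultaneously $\frac{d(x_0,w_n^{-2}x_0)}{n} > 2L - \epsilon$ and $\frac{2d(x_0,w_nx_0)}{n} < 2L + \epsilon$ (each holds with probability $\to 1$ by the convergences above, so their intersection does too), we have
\[
\frac{d(x_0,w_n^{-2}x_0)}{n} > 2L - \epsilon = (2L+\epsilon) - 2\epsilon > \frac{2d(x_0,w_nx_0)}{n} - 2\epsilon > \frac{2d(x_0,w_nx_0)}{n} - \frac{2l(n)}{n},
\]
the last step using $\frac{2l(n)}{n} < L - 3\epsilon$, which exceeds $2\epsilon$ once $\epsilon$ is small enough relative to $L$ (shrink $\epsilon$ at the outset so that $2\epsilon < L - 3\epsilon$, i.e.\ $\epsilon < L/5$). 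This gives the desired inequality on an event of probability tending to $1$, completing the proof.

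The step I expect to be the main (minor) obstacle is the bookkeeping in the first step: correctly identifying $w_n^{-2}$ as the $2n$-th step of a genuine $\check\mu$-random walk so that Theorem \ref{thm:drift} legitimately applies, and being careful that the drift of the $\check\mu$-walk is the same constant $L$ (this holds because drift is a property of the pair $(\mu, G\curvearrowright X)$ and the two walks trace the same subgroup action; alternatively one notes $d(x_0, w_n^{-2}x_0)$ and $d(x_0, w_{2n}x_0)$ have comparable asymptotics via the reflected walk). Everything after that is the elementary limit argument above.
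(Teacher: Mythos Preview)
There is a genuine gap in your first step. You claim that $w_n^{-2}$ is the $2n$-th step of a $\check\mu$-random walk with increments $g_n^{-1},\dots,g_1^{-1},g_n^{-1},\dots,g_1^{-1}$, and that these are i.i.d.\ with law $\check\mu$. They are not: the second block of $n$ increments is \emph{identical} to the first block, so you only have $n$ independent variables, each used twice. Consequently Theorem~\ref{thm:drift} does not apply to the sequence $(w_n^{-2})_n$, and you have no justification for the assertion $d(x_0, w_n^{-2}x_0)/(2n)\to L$.

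This is not a technicality that can be patched by bookkeeping. Your observation that
\[
(w_nx_0\cdot w_n^{-1}x_0)_{x_0}=d(x_0,w_nx_0)-\tfrac12\,d(x_0,w_n^2x_0)
\]
is correct, but it shows that the assertion $d(x_0,w_n^2x_0)/n\to 2L$ in probability is \emph{equivalent} (given positive drift) to the conclusion of the lemma. So your argument is circular: the only content of the lemma is precisely that the two copies of $w_n$ in $w_n^2$ do not cancel, and this is exactly what you assume when you treat the repeated increments as independent. The paper's proof avoids this by splitting $w_n=w_m u_m$ at the midpoint $m=\lfloor n/2\rfloor$, so that $w_m$ and $u_m$ are genuinely independent; one then controls $(w_nx_0\cdot w_mx_0)_{x_0}$ and $(w_n^{-1}x_0\cdot u_m^{-1}x_0)_{x_0}$ via shadow estimates, controls $(w_mx_0\cdot u_m^{-1}x_0)_{x_0}$ via Lemma~\ref{lem:ind}, and combines these using the four-point ``fellow traveling is contagious'' inequality for Gromov products.
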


\begin{proof}
We follow the argument in \cite{MaherTiozzo}.
For each $n$, let $m := \lfloor \frac{n}{2} \rfloor$, and we can write  $w_n = w_m u_m$, with $u_m := w_m^{-1} w_n = g_{m+1}\dots g_n$.
Note that the random walks $w_m$ and $u_m$ are independent, and $u_m$ has distribution $\mu_{n-m}$. We first claim that 
$$\mathbb{P}[ (w_n x_0 \cdot w_m x_0)_{x_0} \ge l(n) ] \to 1 \qquad \textup{as }n \to \infty.$$
\begin{proof}[Proof of claim]
Indeed, 
\begin{align*}
\mathbb{P}[ (w_n x_0 \cdot w_m x_0)_{x_0} \ge l(n) ] & = \mathbb{P}[ (u_m x_0 \cdot x_0)_{w_m^{-1} x_0} \ge l(n) ] \\
& =  \mathbb{P}[u_m x_0 \in S_{w_m^{-1} x_0}(x_0,R) ] %\\
\end{align*}
where $R := d(x_0, w_m^{-1} x_0) - l(n)$. Using Observation \eqref{ob:shadow}, we note that
$$(X \setminus S_{w_m^{-1}x_0}(x_0, R))\subset S_{x_0}(w_m^{-1}x_0,R_n),$$
where $R_n = l(n) + C$, and $C$ depends only on the hyperbolicity constant of $X$. This implies that
\begin{align*}
\mathbb{P}[ (w_n x_0 \cdot w_m x_0)_{x_0} \ge l(n) ]  & \ge 1-  \mathbb{P}[u_m x_0 \in S_{ x_0}(w_m^{-1} x_0, R_n) ]. 
\end{align*}
Further, using independence of $w_m$ and $u_m$, we see
\begin{align}
\mathbb{P}[u_m x_0 \in S_{ x_0}(w_m^{-1} x_0, R_n) ]  & =\sum_{g\in G}\mathbb{P}[u_m x_0 \in S_{x_0}(w_m^{-1} x_0,R_n) \ | \  w_m =g ] \cdot \mu_m(g) \nonumber \\
&=  \sum_{g\in G}  \mu_{n-m}(S_{x_0}(g^{-1}x_0,R_n)) \mu_m(g).  \label{sum}
\end{align}
Let us now pick $\epsilon > 0$ such that $\limsup \frac{l(n) + \epsilon n}{n} < \frac{L}{2}$, where $L$ is the drift of $(w_n)$; then by considering in Equation \eqref{sum}
only the $g$ such that $d(x_0, g^{-1} x_0) \geq l(n) + \epsilon n$, and using the estimate for the distance parameter of $S_{x_0}(g^{-1}x_0,R_n)$
we get
\begin{align*}
\mathbb{P}[u_m x_0 \in S_{ x_0}(w_m^{-1} x_0, R_n) ] &\le f( \epsilon n - C ) +  \mathbb{P}[d(x_0, w_m x_0) \le l(n) + \epsilon n ]. 
\end{align*}
The first term tends to $0$ by the decay of shadows (Lemma \ref{lem:shadow_decay}) and the second because of linear progress (Theorem \ref{thm:drift}). This proves the claim.
\end{proof}

We return to the proof of Lemma \ref{lem:inverse}. 
As in the proof of the claim, replacing $w_n$ with $w_n^{-1}$ and $w_m$ with $w_n^{-1}w_m = u_m^{-1}$ it follows  that
$$\mathbb{P}[ (w_n^{-1} x_0 \cdot w_n^{-1}w_m x_0)_{x_0} \ge l(n) ] \to 1 \qquad \textup{as }n \to \infty.$$
Then, by Lemma \ref{lem:inverse} (using that $w_m$ and $u_m$ are independent)
$$\mathbb{P}[ (w_m x_0 \cdot w_n^{-1}w_m x_0)_{x_0} \le l(n)-3\delta ] \to 1 \qquad \textup{as }n \to \infty.$$
Finally, by $\delta$-hyperbolicity (Lemma \ref{lem:fellow_travel} below), 
$$\mathbb{P}[ (w_n x_0 \cdot w_n^{-1} x_0)_{x_0} \le l(n)+ 2\delta ] \to 1 \qquad \textup{as }n \to \infty.$$
This completes the proof.
\end{proof}

The following lemma was used in the proof of Lemma \ref{lem:inverse}. It appears as Lemma $5.9$ in \cite{MaherTiozzo}, but is proven here for convenience to the reader.
\begin{lemma}[Fellow traveling is contagious]\label{lem:fellow_travel}
Suppose that $X$ is a $\delta$--hyperbolic space with basepoint $x_0$ and suppose that $A\ge0$. If $a,b,c,d \in X$ are points of $X$ with $(a\cdot b)_{x_0} \ge A$, $(c\cdot d)_{x_0} \ge A$, and $(a\cdot c)_{x_0} \le A-3\delta$. Then $(b \cdot d)_{x_0} -2\delta \le (a \cdot c)_{x_0} \le (b \cdot d)_{x_0} +2\delta$.
\end{lemma}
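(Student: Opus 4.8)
The plan is to unwind the definition of the Gromov product and use the standard four-point characterization of $\delta$-hyperbolicity, namely that for any five points $a,b,c,d,x_0$ we have
\[
(a\cdot d)_{x_0} \ge \min\{(a\cdot b)_{x_0},\, (b\cdot c)_{x_0},\, (c\cdot d)_{x_0}\} - 2\delta,
\]
and symmetrically with the roles of the pairs interchanged. First I would apply this inequality to estimate $(b\cdot d)_{x_0}$ from below: using the chain $b, a, c, d$ we get $(b\cdot d)_{x_0} \ge \min\{(b\cdot a)_{x_0}, (a\cdot c)_{x_0}, (c\cdot d)_{x_0}\} - 2\delta$. Since by hypothesis $(a\cdot b)_{x_0}\ge A$ and $(c\cdot d)_{x_0}\ge A$ while $(a\cdot c)_{x_0}\le A-3\delta < A$, the minimum is realized by the middle term $(a\cdot c)_{x_0}$, yielding $(b\cdot d)_{x_0} \ge (a\cdot c)_{x_0} - 2\delta$, which is one of the two desired inequalities.

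For the reverse inequality I would run the same argument with the roles of $\{a,c\}$ and $\{b,d\}$ swapped: apply the four-point inequality along the chain $a, b, d, c$ to get $(a\cdot c)_{x_0} \ge \min\{(a\cdot b)_{x_0}, (b\cdot d)_{x_0}, (d\cdot c)_{x_0}\} - 2\delta$. Again $(a\cdot b)_{x_0}\ge A$ and $(c\cdot d)_{x_0}\ge A$, so if the minimum were achieved by one of these two terms we would get $(a\cdot c)_{x_0}\ge A-2\delta > A-3\delta$, contradicting the hypothesis $(a\cdot c)_{x_0}\le A-3\delta$. Hence the minimum must be $(b\cdot d)_{x_0}$, giving $(a\cdot c)_{x_0}\ge (b\cdot d)_{x_0}-2\delta$, i.e. $(b\cdot d)_{x_0}\le (a\cdot c)_{x_0}+2\delta$. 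Combining the two bounds gives the claimed two-sided estimate.

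The only subtlety — and the one place to be careful — is making sure the correct term realizes the minimum in each application of the four-point inequality; this is exactly where the hypothesis $(a\cdot c)_{x_0}\le A-3\delta$ (rather than merely $\le A$) is used, since it provides the strict gap needed to force the minimum onto the intended term despite the $-2\delta$ error. Everything else is a direct substitution. One should also state at the outset which convention of the hyperbolicity inequality is in force (the constant may shift by a factor of $2$ depending on the reference), but with the Gromov four-point condition as above the constants work out to exactly $2\delta$ as stated.
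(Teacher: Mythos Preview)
Your proof is correct and follows essentially the same idea as the paper's: both use the Gromov-product hyperbolicity inequality together with the $3\delta$ gap in the hypothesis to force the minimum onto the term $(a\cdot c)_{x_0}$ (respectively $(b\cdot d)_{x_0}$). The only difference is packaging: you invoke the iterated four-term chain inequality $(p\cdot s)_{x_0}\ge \min\{(p\cdot q)_{x_0},(q\cdot r)_{x_0},(r\cdot s)_{x_0}\}-2\delta$ directly, whereas the paper applies the basic three-term inequality twice in each direction, passing through the intermediate product $(a\cdot d)_{x_0}$; your version is slightly more streamlined but not genuinely different.
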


\begin{proof}
By hyperbolicity, $(a\cdot c)_{x_0} \ge \min\{(a\cdot b)_{x_0},(b \cdot c)_{x_0} \} -\delta$. Since $(a \cdot c)_{x_0} \le (a \cdot b)_{x_0}- 3\delta$, it must be that $(a \cdot c)_{x_0} \ge (b \cdot c)_{x_0}  -\delta$. Exactly the same reasoning using the inequality $(a\cdot c)_{x_0} \ge \min\{(a\cdot d)_{x_0},(d \cdot c)_{x_0} \} -\delta$ gives that $(a\cdot c)_{x_0} \ge (a \cdot d)_{x_0} -\delta$. Hence, $(a \cdot d)_{x_0} \le (a\cdot c)_{x_0} +\delta \le (a \cdot b)_{x_0} -2\delta$.

Another application of hyperbolicity yields $(a \cdot d)_{x_0} \ge \min\{(a\cdot c)_{x_0},(c\cdot d)_{x_0}\} -\delta = (a\cdot c)_{x_0} -\delta$. Combining these facts we have
\begin{align*}
(b\cdot d)_{x_0} &\ge \min\{(a\cdot b)_{x_0}, (a\cdot d)_{x_0} \} -\delta \\
&=(a \cdot d)_{x_0} -\delta\\
&\ge (a \cdot c)_{x_0} - 2\delta.
\end{align*}

To prove the reverse inequality, first note that the inequality $(a \cdot d)_{x_0} \le (a \cdot b)_{x_0} -2\delta$ obtained above implies that 
$(a\cdot d)_{x_0} \ge \min\{(a\cdot b)_{x_0}, (b \cdot d)_{x_0} \} -\delta = (b\cdot d)_{x_0} -\delta$. Then
\begin{align*}
(a\cdot c)_{x_0} &\ge \min\{(a\cdot d)_{x_0}, (d\cdot c)_{x_0} \} -\delta\\
&= (a\cdot d)_{x_0} -\delta \\
&\ge (b\cdot d)_{x_0} -2\delta,
\end{align*}
as required.
\end{proof}

We can now complete the proof of Theorem \ref{th:random_qi}.
\begin{proof}[Proof of Theorem \ref{th:random_qi}]
Fix $x_0\in X$ and set $\Gamma(n) = \langle w_n^1, \ldots w_n^k \rangle$. By Lemma \ref{lem:qi_conditions},
the probability that $\Gamma(n) \to X$ is a quasi-isometric embedding is bounded below by the probability that  
\begin{equation} \label{to_one}
d(x_0, w_n^ix_0) \ge 2((w_n^j)^{\pm 1}x_0 \cdot (w_n^l)^{\pm 1}x_0)_{x_0} +18\delta +1
\end{equation}
for all choices of indices $1\le i,j,l \le k$, excluding the cases that produce terms involving the Gromov product of a point with itself.

It is easily verified that the probability of \eqref{to_one} goes to $1$ as $n \to \infty$. Indeed, by Theorem \ref{thm:drift},
$$\P[d(x_0, w_n^ix_0) \ge Ln] \to 1 $$
as $n\to \infty$, for each $1\le i \le k$, where $L$ is the drift of the random walks. Moreover, combining Lemma \ref{lem:ind} and Lemma \ref{lem:inverse}, we see that for $j \neq l$,
$$\P[((w_n^j)^{\pm 1}x_0 \cdot (w_n^l)^{\pm 1}x_0)_{x_0} \le l(n)] \to 1 $$
and for each $1 \le j \le k$,
$$\P[(w_n^jx_0 \cdot (w_n^j)^{-1}x_0)_{x_0} \le l(n)] \to 1, $$
for any function $l(n)$ with $l(n)\to \infty$ as $n \to \infty$ and $\limsup \frac{l(n)}{n} \le \frac{1}{2}L$. Additionally choosing $l(n)$ so that $2l(n) +18\delta +1 \le Ln$ completes the proof.
\end{proof}

\bibliography{random_extensions.bbl}
\bibliographystyle{amsalpha}
\end{document}